\documentclass{mcom-l}

\newtheorem{tm}{Theorem}[section]
\newtheorem{ap}{Assumption}[section]

\newtheorem{prop}{Proposition}[section]
\newtheorem{lm}{Lemma}[section]

\theoremstyle{remark}
\newtheorem{rk}{Remark}[section]

\numberwithin{equation}{section}

\usepackage{graphicx} 
\usepackage{extarrows}
\usepackage{latexsym}
\usepackage{amsmath}
\usepackage{amssymb}
\usepackage{amsfonts}
\usepackage{verbatim}
\usepackage{mathrsfs}
\usepackage[modulo]{lineno} 
\usepackage{color}
\usepackage{xcolor}
\usepackage[colorlinks,citecolor=blue,urlcolor=blue]{hyperref}
\usepackage{soul}

\newcommand{\ee}{\mathbb E}

\newcommand{\pp}{\mathbb P}
\newcommand{\nn}{\mathbb N}
\newcommand{\rr}{\mathbb R}

\newcommand{\CC}{\mathcal C}
\newcommand{\LL}{\mathcal L}
\newcommand{\TT}{\mathcal T}

\newcommand{\PP}{\mathcal P}

\newcommand{\OOO}{\mathscr O}
\newcommand{\FFF}{\mathscr F}

\newcommand{\<}{\langle}
\renewcommand{\>}{\rangle}
\allowdisplaybreaks \allowdisplaybreaks[4]

\newcommand{\dd}{\mathrm{d}}

\begin{document}

\title[Uniform Strong Error Estimates of Tamed-FEM to Superlinear SPDEs]
{Uniform-in-time Strong Error Estimates of Tamed-FEM to Superlinear SPDEs driven by Multiplicative Noise}
 
\author{Jingjing CAI}
\address{Department of Mathematics, Southern University of Science and Technology, Shenzhen, 518055, P. R. China}
\email{12431001@mail.sustech.edu.cn}
 
\author{Zhihui LIU}
\address{Department of Mathematics \& National Center for Applied Mathematics Shenzhen (NCAMS) \& Shenzhen International Center for Mathematics, Southern University of Science and Technology, Shenzhen, 518055, P. R. China}
\email{liuzh3@sustech.edu.cn (Corresponding author)}

\thanks{The authors are supported by the National Natural Science Foundation of China (NNSFC), No. 12101296, Basic and Applied Basic Research Foundation of Guangdong Province, No. 2024A1515012348, and Shenzhen Basic Research Special Project (Natural Science Foundation) Basic Research (General Project), No. JCYJ20240813094919026.}


\subjclass[2020]{Primary 60H35; Secondary 60H15, 65M60}


\keywords{superlinear stochastic partial differential equation, 
stochastic Allen--Cahn equation,
uniform-in-time strong error estimate,
exponential ergodicity, 
ergodic estimate}

\begin{abstract}
We establish sharp, uniform-in-time strong error estimates for a nonlinearity-explicit tamed finite element method (FEM) applied to a class of superlinear stochastic partial differential equations (SPDEs) driven by multiplicative noise, including the stochastic Allen--Cahn equation with a moderately thick interface. This tamed-FEM was first introduced in [Z. Liu and J. Shen, arXiv:2502.19117] to ensure long-time unconditional stability and to preserve the Lyapunov structure of this class of SPDEs. We further prove that the scheme is exponentially ergodic and derive the convergence rate between the exact invariant measure and its numerical counterpart in the Wasserstein-2 distance. Finally, we present numerical experiments that verify the ergodicity as well as the sharpness and time-independence of the strong convergence rates for this tamed-FEM. 
\end{abstract}

\maketitle

\section{Introduction}

There is a well-developed theory of strong error estimates for numerical approximations of SPDEs with Lipschitz coefficients over a finite time horizon; see, e.g., \cite{ACLW16, CHL17, JR15} and references therein. This theory has recently been extended to SPDEs with monotone and superlinear coefficients driven by multiplicative noise in \cite{Liu26, LQ21}. In the past decade, many researchers have analyzed this question for SPDEs with non-Lipschitz coefficients, such as the 2D stochastic Navier--Stokes equation \cite{Dor12}, the 1D stochastic Burgers equation and the Cahn--Hilliard--Cook equation \cite{HJ14}, and the 1D stochastic nonlinear Schr\"odinger equation \cite{CHL17, CHLZ19}, by exploiting the exponential integrability property. In contrast, much less is known about strong error estimates for numerical approximations of SPDEs that remain valid over infinite time horizons.

For superlinear SPDEs, long-time analysis is particularly delicate.
It is well known that the backward Euler method studied in \cite{Liu26} and other implicit schemes for SPDEs often entail high computational costs; nonlinearity-explicit schemes are generally preferred to reduce these costs. However, it was demonstrated in \cite{BHJKLS19} that the classical Euler--Maruyama scheme and its Galerkin-based full discretizations, when applied to a class of superlinear SPDEs, lead to a blow-up of the $p$-th moment for all $p \geq 2$.   
 
To prevent blow-up, \cite{LS25} recently constructed the first nonlinearity-explicit tamed-FEM for a class of superlinear SPDEs driven by multiplicative noise. The authors showed that this tamed-FEM is unconditionally stable over infinite time intervals and uniquely ergodic, and that it strongly converges to the original SPDE at an optimal rate on any finite time interval. However, whether these rates remain valid over an infinite time horizon remains unknown, posing an intriguing and challenging problem.  

This challenge has driven significant research over the past decade, inspiring the development and long-time numerical analysis of the second-order parabolic SPDE
\begin{align}\label{see-fg} 
& {\rm d} X(t, \xi)  
=(\Delta X(t, \xi)+f(X(t, \xi))) {\rm d}t
+g(X(t, \xi)) {\rm d}W(t, \xi),\quad (t, \xi) \in \rr_+\times \OOO,
\end{align}
with the homogeneous Dirichlet boundary condition (DBC) $X(t, \xi)=0$, $(t, \xi) \in \rr_+\times \partial \OOO$, and initial condition $X(0, \xi)=X_0(\xi)$, $\xi \in \OOO$. Here, $\OOO \subset \rr^d$ ($d=1,2,3$) is a bounded open set with a piecewise smooth boundary $\partial \OOO$; $f:\rr\to\rr$ is assumed to be of monotone type with superlinear growth, with $g:\rr\to\rr$ Lipschitz continuous in the standard infinite-dimensional sense; and $W$ is an infinite-dimensional Wiener process (cf. Section \ref{sec2} for details). Equation \eqref{see-fg} includes the stochastic Allen--Cahn equation, which arises from phase transitions in materials science under stochastic perturbations, corresponding to $f(\xi)=\epsilon^{-2} (\xi-\xi^3)$, $\xi \in \rr$, where $\epsilon>0$ is the interface thickness; see, e.g., \cite{BGJK23, BP24, CH19, FLZ17, HS23, LL26, Liu22, LQ20, LQ21, QW26} and references therein. 

To derive uniform-in-time strong error estimates of the tamed-FEM (see Theorems \ref{tm-err} and \ref{tm-err-}), a key step is the rigorous analysis of its uniform-in-time unconditional stability in the $\dot H^1$-norm (see Proposition \ref{lm-reg-aux}). This is followed by a sequence of new estimates for the tamed function (Lemma \ref{lm-ftau}), in particular the uniform upper bound \eqref{ftau'+} for $f_\tau' + \tau |f_\tau'|^2$, an estimate that is not available for $|f_\tau'|$ alone. To our knowledge, this is the first uniform-in-time strong error estimate for nonlinearity-explicit schemes that preserve the unique ergodicity of superlinear SPDEs driven by multiplicative noise. Combined with the exponential ergodicity of the tamed-FEM, which follows from the Lyapunov structure derived in \cite{LS25} and an exponential continuity estimate with respect to initial data, we obtain an ergodic error estimate---estimate between the exact and numerical invariant measures---in the Wasserstein-2 distance (see Theorem \ref{tm-erg}).  

The paper is organized as follows. In Section \ref{sec2}, we introduce some preliminaries. Several a priori uniform-in-time estimates of Eq. \eqref{see-fg}, the tamed-FEM, and a related auxiliary process are given in Section \ref{sec3}. We establish the strong error estimates between the tamed-FEM and Eq. \eqref{see-fg} in Section \ref{sec4}. In Section \ref{sec5}, we show the exponential ergodicity of the tamed-FEM and an ergodic error estimate in the Wasserstein-2 distance. Finally, we design numerical experiments to verify the ergodicity, as well as the sharpness and time-independence of the strong convergence rates, for the tamed-FEM in Section \ref{sec6}.  

\section{Preliminaries}
\label{sec2}

In this section, we introduce preliminaries, including notations and main assumptions, and then propose the considered tamed-FEM.

\subsection{Notations}

For $p \in [1,\infty]$, we denote by $(L_\xi^p, \|\cdot\|_{L_\xi^p})$, $(L^p_t, \|\cdot\|_{L^p_t})$, $(L_\omega^p, \|\cdot\|_{L_\omega^p})$ the usual real-valued Lebesgue spaces in $\OOO$, $\rr_+$, and the sample space $\Omega$, respectively. 
For convenience, we sometimes use the temporal, sample-path, and spatially mixed norm 
$\|\cdot\|_{L^p_\omega L^{p_1}_t L_\xi^{p_2}}$ in different orders.

Let $H := \{u \in L_\xi^2 : u|_{\partial\OOO}=0\}$, with norm and inner product denoted by $\|\cdot\|$ (or $\|\cdot\|_{L_\xi^2}$) and $\<\cdot,\cdot\>$, respectively. 
 We denote by $A: {\rm Dom}(A)\subset H\rightarrow H$ the Dirichlet Laplacian on $H$.
Then $-A$ possesses a sequence of positive eigenvalues $\{\lambda_k\}_{k \in \nn_+}$ in an increasing order corresponding to the eigenvectors $\{e_k\}_{k \in \nn_+}$ which vanish on $\partial \OOO$:
$-A e_k=\lambda_k e_k$, $k \in \nn_+$.  
Moreover,  $A$ is the infinitesimal generator of an analytic $\CC_0$-semigroup $S(\cdot)=e^{A\cdot}$, and  one can define the fractional powers $(-A)^\theta$ for $\theta\in \rr$ of $-A$.
Let $\dot H^\theta$ be the domain of $(-A)^{\theta/2}$ equipped with the norm $\|\cdot\|_\theta$:
$\|u\|_\theta:=\|(-A)^{\theta/2} u\|$, $u \in \dot H^\theta$.
In particular, one has $\dot H^1=H_0^1 := \{u, \nabla u \in L_\xi^2 : u|_{\partial\OOO}=0\}$ whose inner product is given by 
$\<\cdot, \cdot\>_1:=\<\cdot, \cdot\>+\<\nabla \cdot, \nabla \cdot\>.$
It is clear that $\dot H^{-1}$ is the dual space, with the dualization denoted by ${_{-1}}\<\cdot, \cdot\>_1$, of $\dot H^1$ (with respect to $\<\cdot, \cdot\>$). 
The following Poincar\'e inequalities will be used throughout:
\begin{align} \label{poin}
\|\nabla u\|^2 \ge \lambda_1 \|u\|^2, \quad u \in \dot H^1; \quad 
\|A v\|^2 \ge \lambda_1 \|\nabla v\|^2, \quad v \in \dot H^2.
\end{align} 
   
Let $U$ be another separable Hilbert space and $Q$ be a self-adjoint and nonnegative definite operator on $U$.
Denote by $U_0:=Q^{1/2} U$ and 
$(\LL_2^\theta:=HS(U_0; \dot H^\theta), \|\cdot\|_{\LL_2^\theta})$ the space of Hilbert--Schmidt operators from $U_0$ to $\dot H^\theta$ for $\theta \in \rr_+$.
Let $W:=\{W(t):\ t\ge0\}$ be a $U$-valued $Q$-Wiener process in the stochastic basis $(\Omega,\FFF,(\FFF_t)_{t\ge0},\pp)$, i.e., there exists an orthonormal basis $\{g_k\}_{k=1}^\infty$ of $U$ which forms the eigenvectors of $Q$ with the eigenvalues $\{q_k\}_{k=1}^\infty$: $Q g_k= q_k g_k$, $k \in \nn_+$, and a sequence of mutually independent 1D Brownian motions $\{\beta_k\}_{k=1}^\infty $ such that  
$W=\sum_{k \in \nn_+} \sqrt{q_k} g_k \beta_k(t)$. 

Throughout, we use $C$, $c$, $c_1$, etc., to denote generic constants independent of various discrete parameters, which may differ from one appearance to another.

\subsection{Main Assumptions}

Our central assumption on the drift function $f$ in \eqref{see-fg} is the following dissipativity and polynomial growth conditions.

\begin{ap} \label{ap-f}
$f: \rr \to \rr$ is twice continuously differentiable and there exist constants $q \ge 1$, $K_1, K_3 \in \rr$, $K_2>0$, and $L_i>0$, $i=1,\dots,6$ such that for all $\xi \in \rr$ and $\tau \in (0, 1)$,
\begin{align}    
 \xi f(\xi) \le K_1 - K_2 |\xi|^{q+2}, \label{f'} \\
 [1+\tau |\xi|^{2q}]f'(\xi) - q \tau |\xi|^{2(q-1)} \xi f(\xi) 
& \le K_3 (1+\tau|\xi|^{2q})^{3/2}, \label{ftau'}\\
|f(\xi)| \le L_1 + L_2 |\xi|^{q+1}, ~ 
	|f'(\xi)| \le L_3 + L_4 |\xi|^q, ~ 
& |f''(\xi)| \le L_5 + L_6 |\xi|^{q-1}. \label{f-grow}
\end{align}
\end{ap}

Denote by $\tau \in (0, 1)$ the temporal step-size. 
We tame the superlinear drift $f$ by 
 \begin{align} \label{f-tau}
 f_\tau (\xi):=\frac{f(\xi)}{(1+\tau|\xi|^{2q})^{1/2}}, \quad \xi \in \rr.
\end{align}   

\begin{rk}
It is clear that the condition \eqref{ftau'} is equivalent to the uniform upper bound condition $f_\tau'(\xi)=([1+\tau |\xi|^{2q}]f'(\xi) - q \tau |\xi|^{2(q-1)} \xi f(\xi)) / (1+\tau|\xi|^{2q})^{3/2} \le K_3$, and is satisfied for the stochastic Allen--Cahn equation with $K_3=\epsilon^{-2}$: for  $f(\xi)=\epsilon^{-2} (\xi-\xi^3)$, one has 
$f_\tau'(\xi)
=\epsilon^{-2} (1-3\xi^2-\tau\xi^4-\tau\xi^6) (1+\tau\xi^4)^{-3/2}
\le \epsilon^{-2}$, $\xi\in\mathbb R.$
\end{rk}

Throughout, we assume that $q\ge1$ for $d=1,2$ and $q \in [1,2]$ for $d=3$, so that the Sobolev embeddings 
\begin{align}\label{emb}
\dot H^1\hookrightarrow L^{2(q+1)}_\xi \hookrightarrow\; H \hookrightarrow\; \dot H^{-1} \hookrightarrow\; L^{2(q+1)/(2q+1)}_\xi,\quad d=1,2,3,
\end{align} 
hold (see \cite{LQ21} for the reason of this range of $q$).
Then we can define the Nemytskii operators $F: \dot H^1 \rightarrow \dot H^{-1}$ and $G: H \rightarrow \LL_2^0$ associated with $f$ and $g$, respectively, by
\begin{align} 
F(u)(\xi):& =  f(u(\xi)),\quad u \in \dot H^1,\ \xi \in \OOO, \label{df-F}\\
G(u) g_k(\xi):& =  g(u(\xi)) g_k(\xi), \quad u \in H,~ k \in \nn_+,~ \xi \in \OOO.
\label{df-G}
\end{align}

We will frequently use the following facts about $F$ defined in \eqref{df-F} and the corresponding tamed Nemytskii operator $F_\tau$ of $f_\tau$ defined in \eqref{f-tau}; see \cite[Lemma 4.1]{LS25}.

\begin{rk} \label{rk-ftau}
Let \eqref{f'} and \eqref{ftau'} hold. For all $u, v \in \dot H^1$ and $z \in L_\xi^{2(3q+1)}$, 
\begin{align}  
\|F(u)-F(v)\|_{-1}+ \|F_\tau(u)-F_\tau(v)\|_{-1} 
& \le C (1+\|u\|^q_1+\|v\|^q_1 ) \|u-v\|,   \label{Ftau-} \\  
\|F(z)-F_\tau(z)\|
& \le C \tau (1+\|z\|_{L_\xi^{2(3q+1)}}^{3q+1}).  \label{F-Ftau+} 
\end{align} 
\end{rk}

Our main assumption about the diffusion operator $G$ defined in \eqref{df-G} is given by the following Lipschitz continuity and linear growth conditions.
We refer to \cite[Remarks 2.2-2.3 and Example 5.1]{LQ21} for examples for which Assumptions \ref{ap-f}-\ref{ap-g} hold.

\begin{ap} \label{ap-g}
\begin{enumerate}
\item[(1)]
$G: H\rightarrow \LL_2^0$ is Lipschitz continuous and $G: \dot H^1 \rightarrow \LL_2^1$ grows linearly, i.e., there exist positive constants $K_4, K_5$ and $K_6$ such that 
\begin{align}   
\|G(u)-G(v)\|_{\LL_2^0} \le K_4 \|u-v\|, & \quad u,v \in H, \label{g-lip} \\ 
\|G(z) \|_{\LL_2^1}^2 \le K_5 + K_6\|z\|_1^2, & \quad z \in \dot H^1. \label{g1} 
\end{align}

\item[(2)] 
There exists a constant $\theta \in (0, 1)$ such that $G: \dot H^{1+\theta} \rightarrow \LL_2^{1+\theta}$ grows linearly, i.e., there exist positive constants $K_7$ and $K_8$ such that  
\begin{align} \label{g2}
\|G(z) \|_{\LL_2^{1+\theta}}
\le K_7 + K_8\|z\|_{1+\theta},
\quad z \in \dot H^{1+\theta}.
\end{align}
\end{enumerate}
\end{ap}

With these preliminaries, Eq. \eqref{see-fg} is equivalent to the infinite-dimensional stochastic evolution equation
\begin{align} \label{see}
{\rm d} X(t)=(A X(t)+F(X(t))) {\rm d}t+G(X(t)) {\rm d}W, \quad  t \ge 0.
\end{align}

    \subsection{Tamed-FEM}

Let $h\in (0,1)$, $\TT_h$ be a regular family of quasi-uniform partitions of $\OOO$ with maximal length $h$, and $V_h \subset \dot H^1$ be the space of continuous functions on $\bar \OOO$ which are piecewise linear over $\TT_h$ and vanish on $\partial \OOO$. 
Let $A_h: V_h \rightarrow V_h$ and $\PP_h: \dot H^{-1} \rightarrow V_h$ be the discrete Dirichlet Laplacian and generalized orthogonal projection operators, respectively, defined by 
\begin{align*}  
\<A_h u^h, v^h\> & =-\<\nabla u^h, \nabla v^h\>,
\quad u^h, v^h\in V_h,  \\
\<\PP_h u, v^h\> & ={}_{-1}\<u, v^h\>_1,
\quad u\in \dot H^{-1},\ v^h\in V_h. 
\end{align*}

We discretize Eq. \eqref{see} in space with the Galerkin finite element method (FEM) and in time with the nonlinearity-tamed Euler scheme, called the tamed-FEM, i.e., find $V_h$-valued (time-homogeneous) Markov chains $\{Y_n^h, \FFF_{t_n}\}_{n \in \nn}$ such that 
 \begin{align} \label{t-fem}
  Y_n^h=Y_{n-1}^h+A_h Y_n^h \tau
  + \PP_h F_\tau (Y_{n-1}^h) \tau 
  + \PP_h G(Y_{n-1}^h)\delta_{n-1} W,   
 \end{align} 
 where $\delta_{n-1} W:=W(t_n) -W(t_{n-1})$, $t_n=n \tau$, $n \in \nn_+$, with initial datum $Y^h_0=\PP_h X_0$.  
The tamed-FEM \eqref{t-fem} is linearly implicit, so it can be uniquely solved pathwise.

The tamed-FEM \eqref{t-fem}  can be equivalently rewritten as  
\begin{align}\label{full}
Y_n^h=S_{h,\tau} Y_{n-1}^h+\tau S_{h,\tau} \PP_h F_\tau(Y_{n-1}^h)
+S_{h,\tau} \PP_h G(Y_{n-1}^h) \delta_{n-1} W,
\quad n \in \nn_+,
\end{align}
where $S_{h,\tau}:=({\rm Id}-\tau A_h)^{-1}$, with ${\rm Id}$ denoting the identity operator in $V_h$, is an approximation of $S$ in one step.
Iterating \eqref{full} yields
\begin{align} \label{full-sum}
Y^h_n 
=S_{h,\tau}^n  Y^h_0+\tau \sum_{i=0}^{n-1}  S_{h,\tau}^{n-i} \PP_h F_\tau(Y^h_i)
+\sum_{i=0}^{n-1}  S_{h,\tau}^{n-i} \PP_h G (Y^h_i) \delta_i W,
\quad n \in \nn_+.
\end{align}

 \section{Uniform-in-time A Priori Estimates}
 \label{sec3}

In this section, we derive several uniform-in-time a priori estimates on the exact solution for Eq. \eqref{see}, the tamed-FEM  \eqref{t-fem}, and its auxiliary process.

We will need the following well-known ultracontractive and smoothing properties of the analytic $\CC_0$-semigroup $S$:
\begin{align}
\|S(t) u\|_\mu \le C e^{-ct} t^{-\frac{\mu-\nu}2} \|u\|_\nu,   
& \quad \forall~ t > 0, ~ 0 \le \nu \le \mu \le 2,~ u \in \dot H^\nu, \label{ana} \\
\|(S(t)-{\rm Id}_H) u\| \le C t^{\frac\rho2} \|u\|_\rho, \label{ana-hol} 
& \quad \forall~ t > 0, ~ 0 \le \rho \le 2, ~ u \in \dot H^\rho.
\end{align}   
Similarly to \eqref{ana} (with $\mu=\nu=0$), there holds that 
\begin{align} \label{sht1}
\|S_{h,\tau}^k \PP_h u\|
\le (1+ \tau \lambda_1^h)^{-k} \|u\|,
& \quad \forall~ k \in \nn_+, ~ u \in H,
\end{align}   
where $\lambda_1^h>0$ is the first eigenvalue of the discrete Dirichlet Laplacian $A_h$.

\subsection{Regularity of exact solution} 

Let us begin with the following uniform-in-time regularity of the exact solution $X$ to Eq. \eqref{see}.
Here and below, we denote by $\CC_t^\alpha L_\omega^p \dot H^\beta$ the space of adapted stochastic processes $Z$ such that their $\|\cdot\|_{\CC_t^\alpha L_\omega^p \dot H^\beta}$-seminorm is finite:
$\|Z\|_{\CC_t^\alpha L_\omega^p \dot H^\beta}:= \sup_{0\le s < t <\infty} \|Z(t)-Z(s)\|_{L_\omega^p \dot H^\beta} |t-s|^{-\alpha}<\infty$.
For simplicity, we assume that the initial datum $X_0$ is a deterministic element in the rest of the paper.

\begin{lm} \label{lm-u}
\begin{enumerate}
\item
Let $p\ge2$, $\gamma\in [0,1]$, $X_0 \in \dot H^{1+\gamma}$, and Assumptions \ref{ap-f}-\ref{ap-g}(1) (and (2) if $\gamma=1$) hold with $\lambda_1>K_3+\frac{p(q+1)^2-1}{2}K_6$.  
For any $\beta\in [0,1]$, there exists a constant $C$ such that 
\begin{align}   \label{reg-u}
\|X\|_{L_t^\infty L_\omega^p \dot H^{1+\gamma}}
+ \|X\|_{\CC_t^{\min\{1+\gamma-\beta, 1\}/2} L_\omega^p \dot H^\beta}
& \le C (1+\|X_0\|^{(q+1) ^2}_{1+\gamma}). 
\end{align}  

\item
Let $p\ge2$, $X_0 \in \dot H^1 \cap L_\xi^{2(3q+1)}$ and Assumptions \ref{ap-f}-\ref{ap-g}(1) hold with $\lambda_1>K_3+\frac{p(q+1)-1}{2}K_6$. 
There exists a constant $C$ such that  
\begin{align}   \label{reg-u+}
\|X\|_{L_t^\infty L_\omega^p L_\xi^{2(3q+1)}}
& \le  C (1+\|X_0\|_1^{q+1}+\|X_0\|_{L_\xi^{2(3q+1)}}).
\end{align}
\end{enumerate}  
\end{lm}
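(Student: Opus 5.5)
The plan is to derive uniform-in-time moment bounds by Itô's formula applied to Lyapunov functionals, in increasing order of regularity. Dissipativity comes from the Poincaré inequality \eqref{poin} together with $f'\le K_3$. This last fact is the $\tau\to0$ limit of \eqref{ftau'}, so $f'(\xi)\le K_3$.

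\textbf{Step 1: $\dot H^1$ moments.} Apply Itô's formula to $\|X(t)\|_1^{2}$, working formally or via Galerkin approximation. The drift term is
\[
-2\|AX\|^2-2\<AX,F(X)\> .
\]
Integrating by parts, $-\<AX,F(X)\>=\<\nabla X, f'(X)\nabla X\>\le K_3\|\nabla X\|^2$. By \eqref{g1}, the Itô correction is at most $K_5+K_6\|X\|_1^2$. Using $\|AX\|^2\ge\lambda_1\|\nabla X\|^2$, we obtain
\[
\dd\|X\|_1^2\le\bigl(-2(\lambda_1-K_3)\|X\|_1^2+K_6\|X\|_1^2+C\bigr)\dd t+\dd M .
\]
Raise this to the power $p/2$ and apply Itô again. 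The quadratic-variation term contributes a further factor $(p-2)K_6/2$. The condition on $\lambda_1$ then leaves a strictly negative coefficient $-c\|X\|_1^p$. Taking expectations and applying Gronwall's inequality with an exponential weight $e^{ct}$ gives $\sup_t\ee\|X(t)\|_1^p\le C(1+\|X_0\|_1^p)$.

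The moments needed later are higher: $\|F(X)\|$ involves $\|X\|_1^{q+1}$, and powers of $p(q+1)$ and $p(q+1)^2$ will appear. I therefore run this estimate with $p$ replaced by $p(q+1)$ or $p(q+1)^2$. This is exactly why $\lambda_1>K_3+\frac{p(q+1)^2-1}{2}K_6$ is assumed, with the thresholds differing between parts 1 and 2.

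\textbf{Step 2: $\dot H^{1+\gamma}$ and Hölder bounds.} Use the mild formulation
\[
X(t)=S(t)X_0+\int_0^t S(t-s)F(X)\,\dd s+\int_0^tS(t-s)G(X)\,\dd W .
\]
By \eqref{emb}, $\|F(X)\|\le C(1+\|X\|_{L^{2(q+1)}_\xi}^{q+1})\le C(1+\|X\|_1^{q+1})$. Then \eqref{ana} with $e^{-ct}(t-s)^{-(1+\gamma)/2}$ is integrable uniformly in $t$ for $\gamma<1$. The stochastic integral is handled by the Burkholder–Davis–Gundy inequality together with \eqref{g1} and $\int_0^\infty e^{-2cr}r^{-\gamma}\,\dd r<\infty$.

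The endpoint $\gamma=1$ is where I expect the main obstacle, since the deterministic convolution is only borderline. For it I would bootstrap. First bound $F(X)$ in $\dot H^{\delta}$ using the chain rule, $\nabla f(X)=f'(X)\nabla X$, together with the bound on $\|X\|_{1+\gamma'}$ already obtained for $\gamma'<1$. This costs another power $q+1$ and produces the exponent $(q+1)^2$. Use \eqref{g2} for the noise, absorbing $K_8\|X\|_{1+\theta}$ via the previous step.

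The Hölder seminorm follows by the usual splitting of $X(t)-X(s)$ into three parts: $(S(t-s)-{\rm Id})X(s)$, handled with \eqref{ana-hol} and $\rho=1+\gamma-\beta$, capped at $2$; the drift integral on $[s,t]$; and the stochastic integral on $[s,t]$. Uniformity in time comes from the $e^{-ct}$ factors.

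\textbf{Part 2.} Apply Itô's formula to $\|X\|_{L^{2(3q+1)}_\xi}^{2(3q+1)}=\int|X|^{6q+2}$. The drift gives $\int|X|^{6q}Xf(X)\le\int(K_1|X|^{6q}-K_2|X|^{7q+2})$, which yields strong dissipation. The Laplacian term is nonpositive. For the noise term, bound $\int |X|^{6q}\sum_k|g(X)g_k|^2$ using the $\dot H^1$ moments from Step 1 and Young's inequality, so that it is absorbed into the $-K_2|X|^{7q+2}$ term. Then raise to the power $p/(6q+2)$ and conclude with Gronwall's inequality as before, which gives the claimed dependence on $\|X_0\|_1^{q+1}$ and $\|X_0\|_{L^{2(3q+1)}_\xi}$.
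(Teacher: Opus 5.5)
Your Part 1 is essentially the route the paper takes by citation, and it is fine: uniform $\dot H^1$ moments via It\^o's formula with $f'\le K_3$ and \eqref{poin} at order $p(q+1)^2$, as in \cite{Liu26}; then, as in \cite{LQ21}, the mild formulation, the bootstrap through $\dot H^{1+\gamma'}\hookrightarrow L^\infty_\xi$ and $\|F(X)\|_\beta$ ($\beta<1/2$) at $\gamma=1$, and the three-term splitting for the H\"older seminorm.

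The gap is in Part 2. Write $\Psi:=\sum_k q_k|G(X)g_k|^2$ for the density of the It\^o correction. Under Assumption \ref{ap-g}, the only control on $\Psi$ comes from \eqref{g1}. In $d=3$ this gives at best $\|\Psi\|_{L^3_\xi}\le \sum_k q_k\|G(X)g_k\|_{L^6_\xi}^2\le C\|G(X)\|_{\LL_2^1}^2$. Absorbing $\int|X|^{6q}\Psi\,\dd\xi$ into $K_2\int|X|^{7q+2}\,\dd\xi$ by H\"older and Young would need $\Psi\in L_\xi^{(7q+2)/(q+2)}$, i.e.\ $G(X)g_k\in L_\xi^{2(7q+2)/(q+2)}$. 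But $2(7q+2)/(q+2)>6$ as soon as $q>1$. Equivalently, the best available bound $\|X\|_{L_\xi^{9q}}^{6q}\|\Psi\|_{L^3_\xi}$ involves $L^{9q}_\xi$, and $9q>7q+2$.

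So for $d=3$ and $q\in(1,2]$, which lies inside the admissible range, the step ``absorbed into the $-K_2|X|^{7q+2}$ term'' fails. A pointwise bound such as $\sup_\xi\sum_k q_k|g_k(\xi)|^2<\infty$ is not among the hypotheses. The term you discard as nonpositive is what would rescue it: with $m=6q+2$,
$-m(m-1)\int|X|^{m-2}|\nabla X|^2\,\dd\xi=-\frac{4(m-1)}{m}\|\nabla|X|^{m/2}\|^2\le -c\|X\|_{L_\xi^{3m}}^{m}$,
which dominates $L_\xi^{9q}$ by interpolation. Your closing claim that the It\^o route ``gives the claimed dependence on $\|X_0\|_1^{q+1}$'' is also asserted rather than derived. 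Nothing in that computation produces the power $q+1$. You would still have to check that the $\dot H^1$ moments it consumes are covered by $\lambda_1>K_3+\frac{p(q+1)-1}{2}K_6$.

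Both the factor $\|X_0\|_1^{q+1}$ and that threshold point to a mild-formulation argument. The paper obtains \eqref{reg-u+} by invoking \cite[Lemma 4.2]{LS25} on top of the $\dot H^1$ moments, and the following argument reproduces exactly that form. Use the mild formulation, and bound $\|S(t)X_0\|_{L_\xi^{2(3q+1)}}\le\|X_0\|_{L_\xi^{2(3q+1)}}$ by $L^r$-contractivity of the Dirichlet heat semigroup. Estimate both convolutions in $\dot H^\mu\hookrightarrow L_\xi^{2(3q+1)}$ with $\mu=\frac{3dq}{2(3q+1)}$, which satisfies $\mu\le 9/7<2$ for all admissible $(d,q)$. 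Then \eqref{ana}, $\|F(X)\|\le C(1+\|X\|_1^{q+1})$, BDG and \eqref{g1} lead to the integrable kernels $e^{-c(t-r)}(t-r)^{-\mu/2}$ and $e^{-2c(t-r)}(t-r)^{-(\mu-1)^+}$, uniformly in $t$. Only $p(q+1)$-th moments of $\|X\|_1$ are used, no pointwise structure of the noise is needed, and \eqref{reg-u+} follows directly.
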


\begin{proof}
The first uniform-in-time moment estimate in \eqref{reg-u}  with $\gamma=0$ has been shown in \cite[Proposition 2.1]{Liu26}. Then one can apply the arguments in \cite[Proposition 3.1+Theorem 3.3 and Corollary 3.2]{LQ21} and \cite[Lemma 4.2]{LS25} to show the second H\"older estimate in \eqref{reg-u}  and the $L_\xi^{2(3q+1)}$-estimate \eqref{reg-u+}, taking into account the first uniform moment's estimate.
\end{proof}

\subsection{Estimates of tamed-FEM and auxiliary process}

As in \cite{LQ21}, we introduce the auxiliary process 
\begin{align}\label{aux}
{\widehat Y}_n^h 
=S_{h,\tau}^n \PP_h X_0
+\tau \sum_{i=0}^{n-1} S_{h,\tau}^{n-i} \PP_h F_\tau(X(t_i))
+\sum_{i=0}^{n-1} S_{h,\tau}^{n-i} \PP_h G(X(t_i)) \delta_i W, 
\end{align}
for $n \in \nn_+$, with ${\widehat Y}_0^h=\PP_h X_0$.
It is clear that 
\begin{align} \label{aux+} 
{\widehat Y}_n^h
&={\widehat Y}_{n-1}^h+\tau A_h {\widehat Y}_n^h 
+\tau \PP_h F_\tau(X(t_{n-1}))
+\PP_h G(X(t_{n-1})) \delta_{n-1} W, ~ n \in \nn_+.
\end{align}

Let us begin with the following estimates.

\begin{lm} \label{lm-ftau}
Under Assumption \ref{ap-f}, there exist positive constants $\tau_1 < 1$ and $T_1$ such that for any $\tau \in (0, \tau_1)$, $\xi \in \rr$, $u, v \in H$, and $w \in \dot H^1$,   
\begin{align}
f_\tau'(\xi)+ \tau|f_\tau'(\xi)|^2 & \le K_3+K_3^2\tau,  \label{ftau'+}  \\
\<F_\tau(u)-F_\tau(v), u-v \rangle  + \tau \|F_\tau(u)-F_\tau(v)\|^2 
& \le (K_3+K_3^2 \tau) \|u-v\|^2,  \label{tauftau}   \\ 
\< w, F_\tau(w)\>_1 + \tau \|F_\tau(w)\|_1^2
& \le T_1+(K_3+K_3^2\tau) \|w\|_1^2. \label{dis-mon}
\end{align}
\end{lm}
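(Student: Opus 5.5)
The plan is to derive all three inequalities from a single scalar fact about the map $\varphi_\tau(x):=x+\tau x^2$, evaluated at $x=f_\tau'(\xi)$. First I show that $|f_\tau'|$ is at most of order $\tau^{-1/2}$, so that $f_\tau'(\xi)$ stays in the region where $\varphi_\tau$ is increasing. Then \eqref{ftau'+} follows from the upper bound \eqref{ftau'}, \eqref{tauftau} from the mean value theorem, and \eqref{dis-mon} from the chain rule plus a separate scalar estimate for $\xi f_\tau(\xi)+\tau f_\tau(\xi)^2$.

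\emph{Step 1: the bound $|f_\tau'|\le C\tau^{-1/2}$.} Use the explicit formula for $f_\tau'$ from the Remark after \eqref{f-tau} together with the growth bounds \eqref{f-grow}. This gives
\[
|f_\tau'(\xi)|\le \frac{L_3+L_4|\xi|^q}{(1+\tau|\xi|^{2q})^{1/2}}+\frac{q\tau|\xi|^{2q-1}(L_1+L_2|\xi|^{q+1})}{(1+\tau|\xi|^{2q})^{3/2}} .
\]
Since $|\xi|^q(1+\tau|\xi|^{2q})^{-1/2}\le \tau^{-1/2}$, $\tau|\xi|^{3q}(1+\tau|\xi|^{2q})^{-3/2}\le \tau^{-1/2}$, and $|\xi|^{2q-1}\le 1+|\xi|^{3q}$, we get $|f_\tau'|\le C_0\tau^{-1/2}$ with $C_0$ independent of $\tau$. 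Hence $\tau|f_\tau'|\le C_0\tau^{1/2}\le 1/2$ once $\tau<\tau_1:=\min\{1,(2C_0)^{-2}\}$.

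\emph{Step 2: proof of \eqref{ftau'+}.} Now $f_\tau'(\xi)\in[-1/(2\tau),K_3]$, and $\varphi_\tau$ is increasing on $[-1/(2\tau),\infty)$. So $\varphi_\tau(f_\tau'(\xi))\le\varphi_\tau(K_3)=K_3+K_3^2\tau$. This is the main point where care is needed. If $K_3<-1/(2\tau)$ the interval would be empty, but then Step 1 gives a contradiction, so for small $\tau$ (shrinking $\tau_1$ if necessary when $K_3<0$) monotonicity applies.

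\emph{Step 3: proof of \eqref{tauftau}.} For real numbers $a,b$, the mean value theorem gives $f_\tau(a)-f_\tau(b)=f_\tau'(\eta)(a-b)$ for some $\eta$. Therefore
\[
(f_\tau(a)-f_\tau(b))(a-b)+\tau(f_\tau(a)-f_\tau(b))^2=\varphi_\tau(f_\tau'(\eta))(a-b)^2\le (K_3+K_3^2\tau)(a-b)^2 .
\]
Set $a=u(\xi)$, $b=v(\xi)$ and integrate over $\OOO$. Note that $F_\tau$ maps $H$ into $H$, since $|f_\tau(\xi)|\le C\tau^{-1/2}(1+|\xi|)$.

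\emph{Step 4: proof of \eqref{dis-mon}.} For $w\in\dot H^1$ the chain rule gives $\nabla F_\tau(w)=f_\tau'(w)\nabla w$, so $F_\tau(w)\in \dot H^1$. Split into a gradient part and an $L^2_\xi$ part, $\<w,F_\tau(w)\>_1+\tau\|F_\tau(w)\|_1^2=I_1+I_2$.

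The gradient part is $I_1=\int_\OOO \varphi_\tau(f_\tau'(w))|\nabla w|^2\,\dd\xi$, and it is bounded by $(K_3+K_3^2\tau)\|\nabla w\|^2$ using Step 2.

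The $L^2_\xi$ part is $I_2=\int_\OOO [w f_\tau(w)+\tau f_\tau(w)^2]\,\dd\xi$. This is the second delicate point, because the term $\tau f_\tau^2$ is of size $L_2^2|\xi|^2$ for large $|\xi|$ and cannot be absorbed by $K_3$. Instead I use the dissipativity \eqref{f'}. By \eqref{f'} and \eqref{f-grow},
\[
\xi f_\tau(\xi)+\tau f_\tau(\xi)^2\le \frac{K_1-K_2|\xi|^{q+2}}{(1+\tau|\xi|^{2q})^{1/2}}+\frac{2\tau(L_1^2+L_2^2|\xi|^{2q+2})}{1+\tau|\xi|^{2q}} .
\]
I would split into two regimes.
\begin{itemize}
\item If $\tau|\xi|^{2q}\le1$, the negative term is at most $-K_2|\xi|^{q+2}/\sqrt2$. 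The positive term is at most $2L_2^2\tau^{1/2}|\xi|^{q+2}$ plus a constant, because $\tau|\xi|^q\le\tau^{1/2}$.
\item If $\tau|\xi|^{2q}>1$, the negative term is at most $-K_2|\xi|^2/(\sqrt2\,\tau^{1/2})$, while the positive term is at most $2L_2^2|\xi|^2+2L_1^2$.
\end{itemize}
In both regimes the positive part is absorbed by the negative one for $\tau<\tau_1$ (shrinking $\tau_1$ again), leaving $\xi f_\tau(\xi)+\tau f_\tau(\xi)^2\le C$. Integrating gives $I_2\le T_1:=C|\OOO|$. Adding $I_1$ and $I_2$, and using $\|w\|_1^2\ge\|\nabla w\|^2$ (with $K_3\ge0$ without loss of generality), yields \eqref{dis-mon}.
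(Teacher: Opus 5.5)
Your proposal is correct and follows essentially the paper's route. Both arguments rest on an $O(\tau^{-1/2})$ lower bound on $f_\tau'$. The paper derives $-m_\tau\le f_\tau'\le K_3$ and then uses convexity of $a\mapsto a+\tau a^2$ with an endpoint comparison, whereas you use a two-sided growth bound and monotonicity on $[-1/(2\tau),\infty)$. From there both proofs use a pointwise mean-value representation for \eqref{tauftau}, and both split \eqref{dis-mon} into a gradient part controlled by \eqref{ftau'+} and an $L^2_\xi$ part.

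The one real addition is that you prove the $L^2_\xi$ bound $\xi f_\tau(\xi)+\tau f_\tau(\xi)^2\le C$ directly, via the regimes $\tau|\xi|^{2q}\le 1$ and $\tau|\xi|^{2q}>1$, instead of citing \cite[Lemma 3.1]{LS25}. Your ``$K_3\ge0$ without loss of generality'' is not literally a reduction, since replacing $K_3$ by $0$ changes the stated constant $K_3+K_3^2\tau$. However, the same regime argument gives $\xi f_\tau(\xi)+\tau f_\tau(\xi)^2\le C_M-M|\xi|^2$ for any fixed $M$ once $\tau$ is small. Taking $M\ge|K_3|$ absorbs the extra $\|w\|^2$ term that appears when $K_3<0$.
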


\begin{proof}
By \eqref{f-tau} and the conditions \eqref{f'}-\eqref{f-grow}, we have $f_\tau'(\xi) \le K_3$ and 
\begin{align*}
f_\tau'(\xi)
	&= \frac{[1+\tau |\xi|^{2q}]f'(\xi)-q\tau|\xi|^{2(q-1)} \xi f(\xi)}{(1+\tau |\xi|^{2q})^{3/2}}
	\ge -\frac{L_3+L_4|\xi|^q}{\sqrt{1+\tau |\xi|^{2q}}}
	-qK_1^+\frac{\tau |\xi|^{2q-2}}{(1+\tau |\xi|^{2q})^{3/2}}, 
\end{align*}
$\xi \in \rr$, where $K_1^+:=\max\{0, K_1\}$.
Set $y=\sqrt\tau\,|\xi|^q$. Then
\begin{align*}
	\frac{L_3+L_4|\xi|^q}{(1+\tau|\xi|^{2q})^{1/2}} 
	\le (L_3^2+{L_4^2}/\tau)^{1/2}, \quad 
	\frac{\tau|\xi|^{2q-2}}{(1+\tau|\xi|^{2q})^{3/2}} 
	\le d_q\tau^{1/q},
	\end{align*}
with $d_q:=\sup_{y\ge0}\frac{y^{2-2/q}}{(1+y^2)^{3/2}}$.
Hence
\begin{equation}\label{bound-ftau}
    -m_\tau\le f_\tau'(\xi)\le K_3,
    \qquad m_\tau:= (L_3^2+ L_4^2/\tau )^{1/2}+qK_1^+d_q\tau^{1/q}.
\end{equation}

 Set $\phi_\tau(a):=a+\tau a^2$, $a \in \rr$. Since $\phi_\tau$ is convex, the above bound \eqref{bound-ftau} implies
$\phi_\tau(f_\tau'(\xi)) \le \max\{\phi_\tau(-m_\tau),\phi_\tau(K_3)\}.$ 
For any $\tau \in (0, \tau_0:= \min\{1, 4(L_4+\sqrt{L_4^2+4C_0})^{-2}\})$ with $C_0:=\max\{0,L_3+qK_1^+d_q-K_3\}$, we have
\begin{align*}
	m_\tau\le L_3+ L_4 \tau^{-1/2}+qK_1^+d_q
	\le K_3+C_0+L_4 \tau^{-1/2}
	\le K_3+ \tau^{-1},
\end{align*}
and thus $\phi_\tau(-m_\tau)-\phi_\tau(K_3)=(m_\tau+K_3)\bigl(\tau(m_\tau-K_3)-1\bigr)\le0$.
Consequently,
\begin{align*}
	f_\tau'(\xi)+\tau |f_\tau'(\xi)|^2
	=\phi_\tau(f_\tau'(\xi))
	\le \phi_\tau(K_3)
	=K_3+\tau K_3^2,
\end{align*}
thereby proving \eqref{ftau'+}.

To show \eqref{tauftau} for any $u, v \in H$, we observe that $f_\tau(u)-f_\tau(v)=M_\tau(u, v)(u-v)$ with $M_\tau(u, v):=\int_0^1 f_\tau'(v+s(u-v)) ds$.
Then
\begin{align*}
& \<F_\tau(u)-F_\tau(v), u-v \rangle  + \tau \|F_\tau(u)-F_\tau(v)\|^2 \\
& = \int_{\mathcal{O}} [ (f_\tau(u) - f_\tau(v))(u-v) + \tau |f_\tau(u) - f_\tau(v)|^2] \, d\xi \\
& = \int_{\mathcal{O}} [M_\tau(u, v) + \tau |M_\tau(u, v)|^2 ] |u-v|^2 \, d\xi.
\end{align*}
Here and in what follows, we omit the integration variable when integration is present to lighten the notation.
By Cauchy--Schwarz inequality, we obtain
\begin{align*}
& \<F_\tau(u)-F_\tau(v), u-v \rangle  + \tau \|F_\tau(u)-F_\tau(v)\|^2 \\
& \leq \int_{\mathcal{O}} \Big( \int_0^1 [f_\tau'(v+s(u-v)) + \tau |f_\tau'(v+s(u-v))|^2] \, ds \Big) |u - v|^2 \, d\xi.
\end{align*} 
In combination with the above inequality and \eqref{ftau'+}, we conclude \eqref{tauftau}.

Let $u \in \dot H^1$.
The definition of the inner product and norm in $\dot H^1$ yields that  
\begin{align*}
	&\<u,F_\tau(u)\>_1+\tau\|F_\tau(u)\|_1^2
	 = \<u,F_\tau(u)\>+\tau\|F_\tau(u)\|^2
	+ \int_{\OOO} (f_\tau'(u)+ \tau|f_\tau'(u)|^2) |\nabla u|^2 \dd\xi.
\end{align*}  
For the first term, following the technique used in the proof of \cite[Lemma 3.1]{LS25}, we have the existence of positive constants $\tau_0 < 1$, $T_1$, and $T_1'$ such that for any $\tau \in (0, \tau_0')$, 
$\<u,F_\tau(u)\>+\tau\|F_\tau(u)\|^2 \le T_1 - T_1' \|u\|^2$.
For the second term, the estimate \eqref{ftau'+} implies  
$\int_{\OOO} (f_\tau'(u)+ \tau|f_\tau'(u)|^2) |\nabla u|^2 \dd\xi \le (K_3+K_3^2\tau)\|\nabla u\|^2$.
Combining the above two estimates, we conclude \eqref{dis-mon} with $\tau_1:=\min\{\tau_0, \tau_0'\}$.
\end{proof}

Now we can show the following required uniform-in-time estimates on the tamed-FEM  \eqref{t-fem} and its auxiliary process \eqref{aux}.

\begin{prop} \label{lm-reg-aux} 
Let $p \in \nn_+$, $X_0 \in \dot H^1$, and Assumptions \ref{ap-f}+\ref{ap-g}(1) hold with $\lambda_1 > K_3 + \frac{2p-1}{2} K_6$.  
There exist positive constants $C$ and $\tau_2 <1$ such that 
\begin{align}  \label{reg-ynh} 
 \sup_{n \in \nn_+} \ee \|Y_n^h\|_1^{2p} \le C (1+\|X_0\|_1^{2p}),
\end{align}
for any $h \in (0,1)$ and $\tau \in (0,\tau_2)$. 
Moreover, if $\lambda_1 > K_3 + \frac{2p(q+1)-1}{2} K_6$,  we have  
\begin{align}  \label{reg-aux} 
  \sup_{n \in \nn_+} \ee \|{\widehat Y}_n^h\|_1^{2p}  
& \le C (1+\|X_0\|_1^{2p(q+1)}).
\end{align}

\end{prop}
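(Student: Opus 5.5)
Both estimates will come from an energy identity in $\dot H^1$ for a linearly implicit step, obtained by testing against $-A_h$ applied to the new iterate. We then take $p$-th powers via a discrete Itô/binomial expansion, and close with a discrete Gronwall argument that exploits the spectral gap $\lambda_1$.

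\textbf{Step 1 (one-step identity for $Y^h_n$).} Write \eqref{t-fem} as $({\rm Id}-\tau A_h)Y^h_n=Y^h_{n-1}+\tau\PP_h F_\tau(Y^h_{n-1})+\PP_h G(Y^h_{n-1})\delta_{n-1}W=:Z_{n-1}$. Test with $-A_h Y^h_n$ and use $2\<a-b,a\>=\|a\|^2-\|b\|^2+\|a-b\|^2$ in the discrete $\dot H^1$ seminorm $\|\nabla\cdot\|$. This gives
\[
\|\nabla Y^h_n\|^2+2\tau\|A_hY^h_n\|^2\le \|\nabla Z_{n-1}\|^2 .
\]
Since $\PP_h$ is the $H$-projection on $V_h$, we have $\<\nabla \PP_h u,\nabla v^h\>=\<u,-A_hv^h\>$. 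I will use the Ritz-type bound $\|\nabla \PP_h u\|\le C\|\nabla u\|$ (quasi-uniform mesh), or better, avoid it by writing $Z_{n-1}=Y^h_{n-1}+\PP_h(\tau F_\tau+G\delta W)$. Expanding,
\[
\|\nabla Z_{n-1}\|^2=\|\nabla Y^h_{n-1}\|^2+2\tau\<\nabla Y^h_{n-1},\nabla F_\tau(Y^h_{n-1})\>+\tau^2\|\nabla \PP_hF_\tau\|^2+(\text{noise terms}).
\]
The key point is that the combination $2\tau[\<\nabla w,\nabla F_\tau(w)\>+\tau\|\nabla F_\tau(w)\|^2]$ is controlled by \eqref{dis-mon}: it is at most $2\tau T_1+2\tau(K_3+K_3^2\tau)\|w\|_1^2$. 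This is exactly why the bound \eqref{ftau'+} on $f_\tau'+\tau|f_\tau'|^2$ was prepared, since it absorbs the $\tau^2$ term. The noise cross terms are martingale increments with zero conditional mean. The quadratic noise term has conditional expectation $\tau\|\PP_hG(Y^h_{n-1})\|_{\LL_2^1}^2\le\tau C(K_5+K_6\|Y^h_{n-1}\|_1^2)$ by \eqref{g1}. The mixed $F$–$G$ term also has zero conditional mean. Finally, the dissipation $2\tau\|A_hY^h_n\|^2\ge 2\tau\lambda_1^h\|\nabla Y^h_n\|^2$ (discrete Poincaré, with $\lambda_1^h\ge\lambda_1$ for conforming FEM) supplies the factor $(1+2\lambda_1\tau)$ on the left.

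\textbf{Step 2 ($2p$-th moments).} Set $a_n=\|\nabla Y^h_n\|^2$. Step 1 gives $(1+2\lambda_1\tau)a_n\le a_{n-1}+2\tau(K_3+K_3^2\tau)a_{n-1}+2\tau T_1+M_n+R_n$, where $M_n$ is linear in $\delta_{n-1}W$ and $R_n$ is quadratic in it. Raise to the power $p$ and expand binomially. The terms of first order in $\tau$ produce the coefficient $1+\tau[2pK_3+p(2p-1)K_6]+O(\tau^2)$: the factor $p(2p-1)$ comes from $\binom p2\ee|M_n|^2$ plus $p\,\ee R_n$. Higher products of the Wiener increment give $O(\tau^2)(1+a_{n-1}^p)$ after using the Gaussian moments of $\delta W$, and lower powers of $a$ are absorbed by Young's inequality. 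The hypothesis $\lambda_1>K_3+\frac{2p-1}2K_6$ then yields $\ee a_n^p\le (1-c\tau)\ee a_{n-1}^p+C\tau$ for $\tau<\tau_2$, and iterating gives \eqref{reg-ynh}.

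The main obstacle is controlling the higher-order terms in $\tau$. Terms like $\tau^2\|\nabla \PP_hF_\tau\|^2$ that appear outside the good combination, and powers of $\tau F_\tau$, must be bounded uniformly in $h$. I would rely on the taming bound $|f_\tau|\le C\tau^{-1/2}(1+|\xi|)$ and on $|f'_\tau|\le m_\tau\lesssim\tau^{-1/2}$ from \eqref{bound-ftau}, so that $\tau\|F_\tau(w)\|_1\lesssim\sqrt\tau(1+\|w\|_1)$. This makes all such terms $O(\tau^{3/2})(1+a^p)$, which are harmless for small $\tau$.

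\textbf{Step 3 (auxiliary process).} For $\widehat Y^h_n$ the argument is the same, starting from \eqref{aux+}, except that $F_\tau$ and $G$ are evaluated at $X(t_{n-1})$ rather than at the iterate. Monotonicity is therefore unavailable. Instead I treat the drift and noise as forcing: $2\tau\<\nabla\widehat Y^h_{n-1},\nabla \PP_hF_\tau(X(t_{n-1}))\>\le \epsilon\tau\|\nabla\widehat Y_{n-1}^h\|^2+C_\epsilon\tau\|F_\tau(X)\|_1^2$. By the growth bound \eqref{f-grow} and Sobolev embedding, $\|F_\tau(X)\|_1\lesssim 1+\|X\|_{L^\infty}^q\|X\|_1$, which I bound through $\dot H^1$-moments of order $q+1$ of $X$; the $\dot H^1\cap L^{2(3q+1)}$ bounds of Lemma \ref{lm-u} are available for this. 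Alternatively, the dissipation $\tau\|A_h\widehat Y_n\|^2$ can absorb it via duality, testing $F_\tau$ against $A_h\widehat Y$ in $H$, so that only $\|F(X)\|\lesssim 1+\|X\|_1^{q+1}$ is needed. The noise term uses \eqref{g1} with $\|X\|_1$. Lemma \ref{lm-u} with moments of order $2p(q+1)$ requires exactly $\lambda_1>K_3+\frac{2p(q+1)-1}2K_6$, and then discrete Gronwall with the spectral gap gives \eqref{reg-aux}.
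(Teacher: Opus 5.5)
Your Steps 1--2 follow the same architecture as the paper's proof. That is: the one-step bound with factor $(1+2\lambda_1\tau)$, \eqref{dis-mon} for the drift, \eqref{g1} for the noise, a $p$-th power with leading coefficient $2pK_3+p(2p-1)K_6$, Young's inequality for the intermediate powers, and iteration. The paper obtains the $p$-th conditional moment of the noisy step from It\^o's formula for $\|Z_j(t)\|_1^{2p}$ on $[0,\tau]$, which gives \eqref{e_j} at once; your binomial expansion with Gaussian moments reaches the same leading term with more bookkeeping.

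The genuine problem is how you resolve your ``main obstacle''. The taming bounds give $\tau\|F_\tau(w)\|_1\le C\sqrt\tau(1+\|w\|_1)$, and hence only $\tau^2\|F_\tau(w)\|_1^2\le C\tau(1+\|w\|_1^2)$. This is first order in $\tau$, not $O(\tau^{3/2})$. The order cannot be improved, because $|f_\tau'|$ really is of size $\tau^{-1/2}$: for Allen--Cahn, $\tau|f_\tau'(\xi)|^2=2\epsilon^{-4}$ at $\xi^2=\tau^{-1/2}$. Such a term competes directly with the dissipation $2\lambda_1\tau\|\nabla w\|^2$. If any piece of $\tau^2\|\nabla F_\tau\|^2$ is left outside the combination controlled by \eqref{ftau'+}, the recursion closes only under a condition like $\lambda_1>K_3+\frac{2p-1}{2}K_6+cL_4^2$, not the stated threshold. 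This is exactly why the paper proves a bound for $f_\tau'+\tau|f_\tau'|^2$ and stresses that none is available for $|f_\tau'|$ alone. Your $\sqrt\tau$ bound is harmless only where $\tau F_\tau$ is paired with the Wiener increment (e.g.\ inside $\ee_{n-1}|M_n|^2$), since an extra factor $\tau$ is present there.

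In particular, the $\PP_h$ discrepancy you noticed cannot be handled this way, for three reasons. First, $\langle\nabla Y,\nabla\PP_hF_\tau(Y)\rangle=\langle -A_hY,F_\tau(Y)\rangle$ is in general not $\langle\nabla Y,\nabla F_\tau(Y)\rangle$. Second, $\|\nabla\PP_hF_\tau\|$ is controlled by $\|\nabla F_\tau\|$ only up to a stability constant. Third, the constant $C$ you insert in $\tau\|\PP_hG\|_{\LL_2^1}^2\le\tau C(K_5+K_6\|\cdot\|_1^2)$ must equal $1$ for the sharp condition on $\lambda_1$. The paper writes the step in \eqref{e_j} without $\PP_h$. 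It then applies \eqref{dis-mon} to the whole quantity $\|Y_j^h+\tau F_\tau(Y_j^h)\|_1^2=\|Y_j^h\|_1^2+2\tau\langle Y_j^h,F_\tau(Y_j^h)\rangle_1+\tau^2\|F_\tau(Y_j^h)\|_1^2$ inside the bracket of \eqref{e_j}, so no $\tau^2\|F_\tau\|_1^2$ is ever left over. You should proceed the same way rather than splitting off $\tau^2\|\nabla\PP_hF_\tau\|^2$ and estimating it crudely, and you should be aware that the sharp threshold rests on this identification.

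For Step 3, your first option fails for $d=2,3$ because $\dot H^1\not\hookrightarrow L_\xi^\infty$. Replacing it by $\|F_\tau(X)\|_1\le C\tau^{-1/2}(1+\|X\|_1)$ makes $C_\epsilon\tau\|F_\tau(X)\|_1^2$ of order one per step, which destroys uniformity. Your alternative, $\tau\langle F_\tau(X(t_{n-1})),-A_h\widehat Y_n^h\rangle\le\frac\tau2\|A_h\widehat Y_n^h\|^2+\frac\tau2\|F(X(t_{n-1}))\|^2$, does work. It is a legitimate substitute for the paper's argument via the discrete mild form \eqref{aux} and the decay of $S_{h,\tau}$. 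Both routes need only $\|F(X)\|\le C(1+\|X\|_1^{q+1})$ and the $\dot H^1$-moments of $X$ of order $2p(q+1)$.
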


\begin{proof} 
In view of \eqref{sht1}, \eqref{f-grow}, and \eqref{g1}, it is not difficult to show that  
\begin{align*} 
 \sup_{n \in \nn} \ee \|{\widehat Y}_n^h\|_1^{2p}  
 \le C (1+ \sup_{t \ge 0} \ee \|X_t\|_1^{2p(q+1)}).
\end{align*}    
This, in combination with the uniform regularity \eqref{reg-u}, gives \eqref{reg-aux}.

To show \eqref{reg-ynh}, we use \eqref{t-fem} and the Poinc\'are inequality \eqref{poin} to get
\begin{align*}
	& (1+2\tau\lambda_1)\|Y_{j+1}^h\|_1^2 
 \le \|Y_j^h+\tau \PP_h F_\tau(Y_j^h) + \PP_h G(Y_j^h)\delta_jW\|_1^2.
\end{align*}
For any $p \in \nn_+$, taking the $p$-th power and the conditional expectation $\ee_j[\cdot]:=\ee[\cdot \mid \FFF(t_j)]$ on both sides of the above inequality, we have
\begin{align*}
 (1+2\tau\lambda_1)^p \ee_j \|Y_{j+1}^h\|_1^{2p} 
 \le \ee_j \|Y_j^h+\tau \PP_h F_\tau(Y_j^h) + \PP_h G(Y_j^h)\delta_jW\|_1^{2p}.
\end{align*} 
Set
$Z_j(t):=Y_j^h+\tau \PP_h F_\tau(Y_j^h) + \PP_h G(Y_j^h)(W(t_j+t)-W(t_j))$ for $0\le t\le \tau$.
Then $Z_j(0)=Y_j^h+\tau \PP_h F_\tau(Y_j^h)$ and $Z_j(\tau)=Y_j^h+\tau \PP_h F_\tau(Y_j^h) + \PP_h G(Y_j^h)\delta_jW$. 

Applying It\^o formula to $\|Z_j(t)\|_1^{2p}$, followed by taking $\ee_j$, and utilizing the estimate $\|G(Y_j^h)^*z\|_{U_0}^2\le \|G(Y_j^h)\|_{\LL_2^1}^2\|z\|_1^2$, $z\in \dot H^1$, and H\"older inequality, we infer
\begin{align*}
\frac{\dd}{\dd t}\ee_j\|Z_j(t)\|_1^{2p} 
&\le p(2p-1)\|G(Y_j^h)\|_{\LL_2^1}^2 \ee_j\|Z_j(t)\|_1^{2p-2}.
\end{align*} 
By H\"older inequality, we have
$\ee_j\|Z_j(t)\|_1^{2p-2} \le (\ee_j\|Z_j(t)\|_1^{2p})^{\frac{p-1}{p}}$.
Substituting this into the above estimate yields
$\frac{\dd}{\dd t} (\ee_j \|Z_j(t)\|_1^{2p})^{1/p} \le (2p-1)\|G(Y_j^h)\|_{\LL_2^1}^2$.
Integrating over $[0,\tau]$ yields
\begin{align}\label{e_j}
	\ee_j\|Y_j^h+\tau F_\tau(Y_j^h)+G(Y_j^h)\delta_jW\|_1^{2p}
\le (\|Y_j^h+\tau F_\tau(Y_j^h)\|_1^2+(2p-1)\tau \|G(Y_j^h)\|_{\LL_2^1}^2 )^p.
\end{align}
This, in combination with \eqref{dis-mon} and \eqref{g1}, gives
\begin{align*}
\ee_j\|Y_{j+1}^h\|_1^{2p}\le \frac{1}{(1+2\tau\lambda_1)^p}\sum_{r=0}^p \mathcal B_r(\tau)\|Y_j^h\|_1^{2r},
\end{align*}
where $\mathcal B_r(\tau):=C_p^r	(1+(2K_3+(2p-1)K_6)\tau+2K_3^2\tau^2)^r((2T_1+(2p-1)K_5)\tau)^{p-r}$, 
$r=0, 1, \cdots, p$.
In particular, $\mathcal B_p(\tau)=	(1+(2K_3+(2p-1)K_6)\tau+2K_3^2\tau^2)^p$.
When $p=1$, the sum $\sum_{r=1}^{p-1}$ is empty and adopted as zero by convention, allowing us to directly obtain the final iterative inequality without further bounding.  
For $p \ge 2$ and $r\in\{1,\cdots,p-1\}$, we apply Young inequality 
to obtain
\begin{align*}
	\mathcal B_r(\tau)\|Y_j^h\|_1^{2r}\le \frac{\Phi(\tau)}{2(p-1)}\|Y_j^h\|_1^{2p}+\frac{p-r}{p}\Big(\frac{2r(p-1)}{p\Phi(\tau)}\Big)^{\frac{r}{p-r}}\mathcal B_r(\tau)^{\frac{p}{p-r}},
\end{align*}
where $\Phi(\tau):= (1+2\tau\lambda_1)^p - \mathcal B_p(\tau)$.
Summing over $r$ then yields
\begin{align*}
	\sum_{r=1}^{p-1}\mathcal B_r(\tau)\|Y_j^h\|_1^{2r}\le \frac{\Phi(\tau)}{2}\|Y_j^h\|_1^{2p}+\sum_{r=1}^{p-1}\frac{p-r}{p}\Big(\frac{2r(p-1)}{p\Phi(\tau)}\Big)^{\frac{r}{p-r}}\mathcal B_r(\tau)^{\frac{p}{p-r}}.
\end{align*}
Combining the above estimates, we have
\begin{align}\label{ej-y}
	\ee_j\|Y_{j+1}^h\|_1^{2p}
	& \le  
	\Big(1-\frac{\Phi(\tau)}{2(1+2\tau\lambda_1)^p}\Big)\|Y_j^h\|_1^{2p}+C(\tau),
\end{align}
with $C(\tau)=(\mathcal B_0(\tau) + \sum_{r=1}^{p-1}\frac{p-r}{p}(\frac{2r(p-1)}{p\Phi(\tau)})^{\frac{r}{p-r}}\mathcal B_r(\tau)^{\frac{p}{p-r}})/(1+2\tau\lambda_1)^p$, with the convention that the sum vanishes if $p=1$. 

Using the elementary inequality $x^p-y^p\ge p(x-y)y^{p-1}$, $p \ge 1$, 
we obtain 
\begin{align} \label{Phi}
	\Phi(\tau)\ge p(2\tau\lambda_1-(2K_3+(2p-1)K_6)\tau-2K_3^2\tau^2) \ge c_1\tau,
\end{align}
for some positive constant $c_1$ and for any $\tau \in (0, \tau_2')$, with $\tau_2':=\min\{1, (\lambda_1-K_3-\frac{2p-1}{2}K_6) K_3^{-2}\} \in (0, 1]$ as $\lambda_1>K_3+\frac{2p-1}{2}K_6$.
On the other hand, it is clear that $\mathcal B_0(\tau)=((2T_1+(2p-1)K_5))^p\tau^p$ and $\mathcal B_r(\tau)\le C\tau^{p-r}$ for $r=1, \cdots, p$.
Combining these with $\Phi(\tau)>c_1\tau$ and $\Phi(\tau)^{-\frac{r}{p-r}}\mathcal B_r(\tau)^{\frac{p}{p-r}}\le C\tau^{p - \frac{r}{p-r}}\le C\tau$ as $1\le r\le p-1$ when $p\ge 2$, we have the existence of a positive constant $c_2$ such that 
\begin{align}\label{Ctau}
	C(\tau)\le c_2\tau.
\end{align}
Combining the estimates \eqref{ej-y}, \eqref{Phi}, and \eqref{Ctau}, we conclude \eqref{reg-ynh}.
\end{proof}

\section{Uniform Strong Error Estimates}
\label{sec4}

In this section, we establish optimal uniform-in-time strong error estimates for the tamed-FEM approximation \eqref{t-fem} of the exact solution of \eqref{see}.

\subsection{Error estimates of auxiliary process}

We will use the following estimates of $E_{h,\tau}(t):=S(t)-S_{h,\tau}^n  \PP_h$ for $t\in (t_{n-1}, t_n ]$ with $n \in \nn_+$, which can be derived similarly to \cite[Lemma 4.1]{LQ21}.

\begin{lm} \label{lm-eht}
Let $t>0$, $0 \le \nu \le \mu \le 2$, and $0 \le \theta \le 1$.
Then
\begin{align} 
\|E_{h,\tau} (t) x\| 
\le C (h^\mu+\tau^\frac\mu2) e^{-ct} t^{-\frac{\mu-\nu}2} \|x\|_\nu,
& \quad \forall~x \in \dot H^\nu \label{eht1} \\
\int_0^\infty \|E_{h,\tau}(r) y\|^2 {\rm d}r  
\le C (h^{1+\theta}+\tau^\frac{1+\theta}2)^2 \|y\|_\theta^2,
& \quad \forall~ y \in \dot H^\theta. \label{eht2}
\end{align} 
\end{lm}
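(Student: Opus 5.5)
To prove Lemma \ref{lm-eht}, I would split $E_{h,\tau}(t)$ into a purely spatial part and a purely temporal part. For $t\in(t_{n-1},t_n]$ write
\begin{align*}
E_{h,\tau}(t)=\big(S(t)-S_h(t)\PP_h\big)+\big(S_h(t)-S_h(t_n)\big)\PP_h+\big(S_h(t_n)-S_{h,\tau}^n\big)\PP_h,
\end{align*}
where $S_h(t)=e^{tA_h}$ is the semidiscrete FEM semigroup. The first term is the classical Galerkin error. For the quasi-uniform mesh, the standard estimates give $\|(S(t)-S_h(t)\PP_h)x\|\le Ch^\mu t^{-(\mu-\nu)/2}\|x\|_\nu$ (Thom\'ee). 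The exponential factor $e^{-ct}$ comes from the spectral gap: the discrete eigenvalues satisfy $\lambda_1^h\ge\lambda_1>0$ because $V_h\subset\dot H^1$ and Rayleigh quotients are monotone. For $t\ge1$ I would factor $S(t)=S(t-1/2)S(1/2)$, and similarly for $S_h$, writing the difference as a telescoping sum of a decaying semigroup applied to the error at time $1/2$. Both semigroups contract at rate $e^{-\lambda_1 t}$, which gives the decay and turns $t^{-(\mu-\nu)/2}$ into a constant for large $t$.

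The third term is the backward Euler error for the discrete semigroup. I would estimate it via the spectral decomposition of $-A_h$ with eigenvalues $\lambda_j^h\ge\lambda_1^h$, using the scalar bound
\begin{align*}
|e^{-n\tau\lambda}-(1+\tau\lambda)^{-n}|\le C\tau^{\mu/2}\lambda^{\nu/2}e^{-c n\tau\lambda}t_n^{-(\mu-\nu)/2}\quad\text{(up to using }e^{-cn\tau\lambda}\le e^{-c\lambda_1 t_n}e^{-c'n\tau\lambda}).
\end{align*}
This scalar bound is standard for the rational $A$-stable approximation. The second term is handled the same way, using $|e^{-t\lambda}-e^{-t_n\lambda}|\le C(\tau\lambda)^{\mu/2}\cdots$. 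Finally I would convert $\|(-A_h)^{\nu/2}\PP_h x\|\le C\|x\|_\nu$ for $\nu\in[0,1]$, which holds by stability of $\PP_h$ in $\dot H^1$ on quasi-uniform meshes, and then interpolate. For $\nu\in(1,2]$ I would instead work with the Ritz projection, as in \cite[Lemma 4.1]{LQ21}. This gives \eqref{eht1}.

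For \eqref{eht2}, the pointwise bound \eqref{eht1} with $\mu=1+\theta$ and $\nu=\theta$ yields the integrand $t^{-1}$, which is not integrable at $0$. This is the main obstacle, and I would instead use the smoothing-in-time (energy) estimates. For the spatial part, the known estimate $\int_0^\infty\|(S(r)-S_h(r)\PP_h)y\|^2\dd r\le Ch^{2(1+\theta)}\|y\|_\theta^2$ holds by interpolating between $\theta=0$ and $\theta=1$, using the parabolic energy identity for the error equation (cf.\ \cite{LQ21}). The infinite horizon is harmless thanks to the $e^{-ct}$ decay. For the temporal parts I would again go spectrally. I need
\begin{align*}
\sum_n\int_{t_{n-1}}^{t_n}|e^{-r\lambda}-(1+\tau\lambda)^{-n}|^2\dd r\le C\tau^{1+\theta}\lambda^{\theta},
\end{align*}
which holds by considering the cases $\tau\lambda\le1$ and $\tau\lambda>1$. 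Summing against $\lambda_j^{h\,\theta}|\<\PP_h y,e_j^h\>|^2\le C\|y\|_\theta^2$ then finishes the proof.
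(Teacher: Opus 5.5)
Your overall route is the standard one the paper has in mind, since it only defers to \cite[Lemma 4.1]{LQ21}: Thom\'ee-type semidiscrete error bounds (pointwise and time-integrated), a spectral analysis of the backward Euler approximation of $S_h$, and the factor $e^{-ct}$ from the spectral gap $\lambda_1^h\ge\lambda_1$. Your treatment of \eqref{eht2}, including the scalar sum split into $\tau\lambda\le1$ and $\tau\lambda>1$, is correct. However, the scalar bound you invoke for the third term in \eqref{eht1} is false as written. Take $x=\tau\lambda$ and $n=1$. Then it reads $|e^{-x}-(1+x)^{-1}|\le Cx^{\nu/2}e^{-cx}$, while the left side behaves like $1/x$ as $x\to\infty$. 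Backward Euler damps the mode $\lambda$ only like $(1+\tau\lambda)^{-n}$, never like $e^{-cn\tau\lambda}$. You cannot ignore this regime, because $\tau\lambda_j^h$ ranges up to order $\tau h^{-2}$, which is unbounded over the admissible $(h,\tau)$. Your parenthetical does not help either, because it starts from the false inequality.

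The fix is to aim only for $e^{-ct_n}$. At high frequencies this comes from $(1+\tau\lambda)^{-n}\le(1+\tau\lambda_1)^{-n}\le e^{-t_n\log(1+\lambda_1)}$, using $\tau<1$. Concretely, factor $e^{-nx}-(1+x)^{-n}=(e^{-x}-(1+x)^{-1})\sum_{k=0}^{n-1}e^{-kx}(1+x)^{-(n-1-k)}$. Combined with $0\le(1+x)^{-1}-e^{-x}\le 4x^2(1+x)^{-3}$, this gives $|e^{-nx}-(1+x)^{-n}|\le 4nx^2(1+x)^{-n-2}$. Now peel off $(1+x)^{-n/2}\le e^{-ct_n}$ and check that $nx^2(1+x)^{-2-n/2}\le C\min\{x,1/n\}\le Cx^{\nu/2}n^{-(\mu-\nu)/2}=C\tau^{\mu/2}\lambda^{\nu/2}t_n^{-(\mu-\nu)/2}$. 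This is exactly what the spectral argument needs.

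Two smaller remarks. First, a single split $S(t)=S(t-\frac12)S(\frac12)$ leaves a semidiscrete error at time $t-\frac12$ with no decay. You need the full telescoping sum $\sum_k(S_h(\frac12)\PP_h)^k[S(\frac12)-S_h(\frac12)\PP_h]S(\cdot)$, with the nonsmooth-data bound on each bracket. This yields $te^{-\lambda_1 t}$, hence any rate $c<\lambda_1$. Second, the Ritz-projection detour for $\nu\in(1,2]$ is unnecessary. With $R_h$ the Ritz projection, $A_hR_h=\PP_hA$ together with the inverse inequality gives $\|A_h\PP_hx\|\le C\|x\|_2$ on quasi-uniform meshes. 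By interpolation, $\|(-A_h)^{\nu/2}\PP_hx\|\le C\|x\|_\nu$ then holds for all $\nu\in[0,2]$.
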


We first consider the strong error estimate between Eq. \eqref{see} and the auxiliary process \eqref{aux}.
All results hold for $\FFF_0$-measurable $X_0$ with certain bounded high-order moments.

\begin{lm} \label{lm-aux} 
Let $p \ge 1$, $X_0 \in \dot H^{1+\gamma} \cap L_\xi^{2(3q+1)}$ with $\gamma\in [0,1]$, and Assumptions \ref{ap-f}-\ref{ap-g}(1) (and (2) if $\gamma=1$) hold with $\lambda_1>K_3+\frac{p(3q+1)(q+1)-1}{2}K_6$. 
There exists a constant $C$ such that    
\begin{align} \label{err-aux}
\sup_{n \in \nn_+} \|X(t_n)-{\widehat Y}_n^h\|_{L_\omega^p L_\xi^2} 
\le C (1 + \|X_0\|_{1+\gamma} + \|X_0\|_{L_\xi^{2(3q+1)}})^{(q+1)(3q+1)} (h^{1+\gamma}+\tau^{1/2} ).
\end{align}   
\end{lm}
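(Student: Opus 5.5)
We compare the mild formulation of $X(t_n)$ with the discrete convolution \eqref{aux}. We write
\begin{align*}
X(t_n)-{\widehat Y}_n^h
=E_{h,\tau}(t_n)X_0
+\int_0^{t_n}\Big(S(t_n-r)F(X(r))-S_{h,\tau}^{n-[r/\tau]}\PP_h F_\tau(X(t_{[r/\tau]}))\Big)\dd r
+\int_0^{t_n}\Big(S(t_n-r)G(X(r))-S_{h,\tau}^{n-[r/\tau]}\PP_h G(X(t_{[r/\tau]}))\Big)\dd W(r).
\end{align*}
Here $[r/\tau]=i$ for $r\in[t_i,t_{i+1})$. We split each convolution into three pieces: an operator-error piece involving $E_{h,\tau}(t_n-r)$ applied to $F(X(r))$ or $G(X(r))$; a time-regularity piece with $S_{h,\tau}^{n-i}\PP_h[F(X(r))-F(X(t_i))]$ or its $G$-analogue; and, for the drift only, a taming piece $S_{h,\tau}^{n-i}\PP_h[F(X(t_i))-F_\tau(X(t_i))]$. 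The overall rule is that every piece must carry an exponentially decaying factor in $t_n-r$, so that the final constants do not depend on $n$.

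The initial term is controlled by \eqref{eht1} with $\mu=\nu=1+\gamma$, which gives $C(h^{1+\gamma}+\tau^{(1+\gamma)/2})\|X_0\|_{1+\gamma}$. For the drift error piece we use \eqref{eht1} with $\mu=1+\gamma$ and $\nu=\gamma-1$ after duality. More simply, we measure $F(X(r))$ in $\dot H^{-1}$ or $H$. The relevant bound is $\|F(X)\|\le C(1+\|X\|_{L_\xi^{2(q+1)}}^{q+1})\le C(1+\|X\|_1^{q+1})$, and it improves to a $\dot H^{\gamma}$-type estimate through the growth bounds \eqref{f-grow}. The resulting integrand $e^{-c(t_n-r)}(t_n-r)^{-\alpha}$ with $\alpha<1$ is integrable uniformly in $n$, and the moments come from \eqref{reg-u}.

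For the time-regularity pieces we use the discrete smoothing \eqref{sht1} for the factor $(1+\tau\lambda_1^h)^{-(n-i)}$, which is summable uniformly in $n$. We combine this with \eqref{Ftau-}, which gives $\|F(X(r))-F(X(t_i))\|_{-1}\le C(1+\|X(r)\|_1^q+\|X(t_i)\|_1^q)\|X(r)-X(t_i)\|$. This requires a $\dot H^{-1}\to H$ smoothing of $S_{h,\tau}^{k}\PP_h$, of size $(k\tau)^{-1/2}e^{-ck\tau}$. Hölder's inequality together with the Hölder estimate in \eqref{reg-u} with $\beta=0$ then produces $\tau^{1/2}$; to reach $\tau$, the whole H\"older exponent would be needed. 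For the taming piece we use \eqref{F-Ftau+} with \eqref{reg-u+}, which gives $C\tau(1+\|X\|^{3q+1}_{L^{2(3q+1)}})$. This piece is responsible for the power $(q+1)(3q+1)$ and for the condition on $\lambda_1$.

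The stochastic terms are handled by the Burkholder--Davis--Gundy inequality in $L^p_\omega$. The $E_{h,\tau}$ piece uses \eqref{eht2} with $\theta=\gamma$ and \eqref{g1} or \eqref{g2}. We take the supremum of the moments of $\|G(X)\|_{\LL_2^{\gamma}}$ out of the integral; since $p>2$, a Minkowski step is needed here. The time-regularity piece uses \eqref{g-lip}, \eqref{sht1}, and the Hölder estimate, giving $\tau\sum_k(1+\tau\lambda_1^h)^{-2k}\sup\|X(r)-X(t_i)\|^2_{L^p_\omega H}\le C\tau$.

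\textbf{Main obstacle.} The difficult step is the drift error piece, where uniformity in time must be kept while obtaining the full rate $h^{1+\gamma}$. This requires enough spatial regularity of $F(X)$ to use \eqref{eht1} with a non-singular integrable kernel. If a direct $\dot H^\gamma$ bound on $F(X)$ is unavailable, I would first try writing $F(X(r))=F(X(t_n))+[F(X(r))-F(X(t_n))]$. The first part is handled as a time-independent term using the bound $\int_0^\infty$ of $E_{h,\tau}$ applied to fixed data. The difference is handled with \eqref{Ftau-} and the H\"older estimate, which absorbs the singularity $(t_n-r)^{-1}$ against $(t_n-r)^{1/2}$ while the exponential factor preserves uniformity.
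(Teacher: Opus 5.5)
For $\gamma\in[0,1)$ your plan essentially follows the paper's proof, with one difference in the drift splitting. You place the time-regularity on $S_{h,\tau}^{n-i}\PP_h$, which needs a discrete $\dot H^{-1}\to H$ smoothing bound for $S_{h,\tau}^k\PP_h$. That bound is standard but not among the paper's tools. The paper avoids it by writing $S(t_n-r)[F(X(r))-F(X(t_i))]+E_{h,\tau}(t_n-r)F(X(t_i))+\cdots$, so the smoothing falls on the exact semigroup. The genuine gap is the endpoint $\gamma=1$ (the $h^2$ rate), which appears in both operator-error pieces.

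\textbf{Drift operator-error piece.} With $\mu=2$, \eqref{eht1} gives the kernel $(t_n-r)^{-(2-\nu)/2}$, which is integrable only if $\nu>0$. None of the choices you write down achieves this.
\begin{itemize}
\item $\nu=\gamma-1$ yields $(t_n-r)^{-1}$ for every $\gamma$, and lies outside the range of \eqref{eht1}.
\item $\nu\in\{-1,0\}$ fails at $\gamma=1$.
\item A ``$\dot H^\gamma$-type'' bound on $F(X)$ is not available in general. For $\beta>1/2$, membership in $\dot H^\beta$ forces a vanishing trace, and Assumption \ref{ap-f} allows $f(0)\neq 0$.
\end{itemize}
The missing ingredient is the fractional Nemytskii bound $\|F(u)\|_\beta\le C(1+\|u\|_{L_\xi^\infty}^{q+1}+\|u\|_\beta^{q+1})$ with $\beta\in(0,1/2)$. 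The paper combines it with $\dot H^2\hookrightarrow L_\xi^\infty$, the $\dot H^2$-moments from \eqref{reg-u}, and \eqref{eht1} at $(\mu,\nu)=(2,\beta)$.

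Your fallback does not close as written, for two reasons.
\begin{itemize}
\item \eqref{eht2} is an $L^2$-in-time bound for fixed data. It does not give $\|\int_0^{t}E_{h,\tau}(s)x\,{\rm d}s\|\le C(h^2+\tau)\|x\|$ uniformly in $t$; that is a separate estimate not in the paper.
\item Bounding $F(X(r))-F(X(t_n))$ via \eqref{Ftau-} puts it in $\dot H^{-1}$, so the kernel is $(t_n-r)^{-3/2}$. After the $\frac12$-H\"older gain you are left with the non-integrable $(t_n-r)^{-1}$.
\end{itemize}
To repair it you would need an $L_\xi^2$-Lipschitz bound through $L_\xi^\infty$-norms (available only because $\gamma=1$), plus that extra integrated estimate.

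\textbf{Stochastic operator-error piece.} Here \eqref{eht2} is misapplied. It is stated for a fixed $y$, whereas in the multiplicative case the data $G(X(r))$ is random and depends on $r$. So you cannot pull $\sup_r\|G(X(r))\|_{L_\omega^p\LL_2^\gamma}$ out of it. Replacing it by the pointwise bound \eqref{eht1} at $(\mu,\nu)=(1+\gamma,\gamma)$ produces $(t_n-r)^{-1}$ after squaring, which is not integrable. The paper instead uses more spatial regularity of $G(X)$ than $\dot H^\gamma$:
\begin{itemize}
\item for $\gamma<1$, \eqref{eht1} with $(\mu,\nu)=(1+\gamma,1)$ together with \eqref{g1}, giving the integrable kernel $(t_n-r)^{-\gamma}e^{-c(t_n-r)}$;
\item for $\gamma=1$, \eqref{eht1} with $(\mu,\nu)=(2,1+\theta)$ together with \eqref{g2}, giving $(t_n-r)^{-(1-\theta)}$.
\end{itemize}
This is exactly where $\theta>0$ in Assumption \ref{ap-g}(2) is needed, a role your plan never pins down.
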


\begin{proof}
Let $n \in \nn_+$.
Subtracting ${\widehat Y}_n^h$ in \eqref{aux} from the mild formulation
$X(t) =S(t) X_0+\int_0^t S(t-r) F(X(r)) {\rm d}r
+\int_0^t S(t-r) G(X(r)) {\rm d}W(r)$
with $t=t_n$, we get
\begin{align} \label{j}
J^n  : = & \|X(t_n )-{\widehat Y}_n^h \|_{L_\omega^p L_\xi^2}
\le \| E_{h,\tau}(t_n ) X_0\|_{L_\omega^p L_\xi^2} \nonumber \\
& + \Big\|\sum_{i=0}^{n-1} \int_{t_i}^{t_{i+1}} [S(t_n-r) F(X(r))-S_{h,\tau}^{n-i} \PP_h F_\tau(X(t_i))] {\rm d}r \Big\|_{L_\omega^p L_\xi^2}\nonumber \\
& + \Big\|\sum_{i=0}^{n-1} \int_{t_i}^{t_{i+1}} [S(t_n-r) G(X(r))-S_{h,\tau}^{n-i} \PP_h G(X(t_i))] {\rm d}W(r) \Big\|_{L_\omega^p L_\xi^2}
=: \sum_{i=1}^3 J^n  _i.
\end{align} 
In the sequel, we treat the above three terms one by one.

The estimation \eqref{eht1} with $\mu=\nu=1+\gamma$ yields that 
\begin{align}  \label{j1}
J^n _1
\le C (h^{1+\gamma}+\tau^\frac{1+\gamma}2 ) \|X_0\|_{1+\gamma},
\quad \gamma\in [0,1].
\end{align}
To deal with the second term, we decompose it into the following three terms:
\begin{align*} 
J^n  _2 
&\le \sum_{i=0}^{n-1} \int_{t_i}^{t_{i+1}} 
\|S(t_n-r) [F(X(r))-F(X(t_i))] \|_{L_\omega^p L_\xi^2} {\rm d}r  \\
&\quad + \sum_{i=0}^{n-1} \int_{t_i}^{t_{i+1}} 
\|E_{h,\tau}(t_n-r) F(X(t_i)) \|_{L_\omega^p L_\xi^2} {\rm d}r \\
&\quad + \sum_{i=0}^{n-1} \int_{t_i}^{t_{i+1}} 
\|S_{h,\tau}^{n-i} [F(X(t_i))-F_\tau(X(t_i))]\|_{L_\omega^p L_\xi^2} {\rm d}r 
=: \sum_{i=1}^3 J^n  _{2i}.
\end{align*} 
The bound \eqref{ana} with $(\mu, \nu)=(1/2, 0)$ and the dual estimation \eqref{Ftau-} yield
\begin{align*} 
J^n  _{21} 
&\le C \sum_{i=0}^{n-1} \int_{t_i}^{t_{i+1}} (t_n-r)^{-1/2} e^{-C (t_n-r)}
\|F(X(r))-F(X(t_i)) \|_{L_\omega^p \dot H^{-1}} {\rm d}r \\ 
& \le C \tau^{1/2} (1+\|X\|^q_{L_t^\infty L_\omega^{2pq} \dot H^1} )
\|X\|_{\CC_t^{1/2} L_\omega^{2p} L_\xi^2}, \quad \gamma\in [0,1],
\end{align*}  
where the elementary estimate
$\int_0^\infty r^{-1/2} e^{-C r} {\rm d}r \le C<\infty$ was used.
Using \eqref{eht1} with $\mu=1+\gamma$ and $\nu=0$ and the embedding \eqref{emb}, we derive  
\begin{align} \label{j22}
J^n _{22}
&\le C (h^{1+\gamma}+\tau^\frac{1+\gamma}2) 
(1+\|X\|_{L_t^\infty L_\omega^p \dot H^1}^{q+1}),
\quad \gamma\in [0,1).
\end{align} 
By \eqref{sht1}, \eqref{F-Ftau+}, \eqref{reg-u+}, and the elementary estimate $\sum_{i=1}^\infty e^{-c i \tau} \le C \tau^{-1}$, we get  
\begin{align} \label{j23} 
J^n  _{23} 
& \le \sum_{i=0}^{n-1} \int_{t_i}^{t_{i+1}} e^{-c(n-i)\tau}
\|F(X(t_i))-F_\tau(X(t_i))\|_{L_\omega^p L_\xi^2} {\rm d}r \nonumber  \\
&\le C \tau (1+\|X\|_{L_t^\infty L_\omega^{p(3q+1)} L_\xi^{2(3q+1)}}^{3q+1}), \quad \gamma\in [0,1].
\end{align}  
Combining the above three estimations and Lemma \ref{lm-u} implies  
\begin{align}  \label{j2}
J^n _2
& \le C (h^{1+\gamma}+\tau^{1/2} )  
(1+ \|X_0\|_1^{(q+1)(3q+1)}+ \|X_0\|_{L_\xi^{2(3q+1)}}^{3q+1}),
\quad \gamma\in [0,1).
\end{align}

For the last term $J^n _3$, it can be controlled by using both the continuous and discrete BDG inequalities (see, e.g., \cite[(2.22) and (2.23)]{LQ21}) as  
\begin{align*} 
(J^n _3)^2  
& \le C \sum_{i=0}^{n-1} \int_{t_i}^{t_{i+1}} 
\|S_{h,\tau}^{n-i} \PP_h [G(X(r))-G(X(t_i))]\|_{L_\omega^p \LL_2^0}^2 {\rm d}r  \\
& \quad + C \sum_{i=0}^{n-1} \int_{t_i}^{t_{i+1}} 
\|E_{h,\tau}(t_n-r) G(X(r)) \|_{L_\omega^p \LL_2^0}^2 {\rm d}r
:=\sum_{i=1}^2 (J^n _{3i})^2.
\end{align*}
Using \eqref{sht1}, \eqref{g-lip}, \eqref{g1}, and \eqref{eht1} with $(\mu,\nu)=(1+\gamma,1)$ for $\gamma\in [0,1)$, we get 
\begin{align*} 
(J^n _3)^2
&\le C \|X\|_{\CC_t^{1/2} L_\omega^{2p} L_\xi^2}^2
\times \sum_{i=0}^{n-1}  \int_{t_i}^{t_{i+1}} (r-t_i) e^{-C(n-i) \tau} {\rm d}r \nonumber  \\ 
&\quad +C (h^{1+\gamma}+\tau^\frac{1+\gamma}2)^2 
(1+\|X\|_{L_t^\infty L_\omega^p \dot H^1})^2 
\times  \int_0^{t_n } r^{-\gamma}  e^{-C r} {\rm d}r.
\end{align*} 
It follows from the estimation \eqref{reg-u} that 
\begin{align}  \label{j3} 
J^n _3
& \le C (1+\|X_0\|_1^{q+1} ) (h^{1+\gamma}+\tau^{1/2}), 
\quad \gamma\in [0,1).
\end{align} 
Putting the estimations \eqref{j1}, \eqref{j2}, and \eqref{j3} together results in
\begin{align*} 
J^n  \le C (1+ \|X_0\|_1^{(q+1)(3q+1)}+ \|X_0\|_{L_\xi^{2(3q+1)}}^{3q+1})
(h^{1+\gamma}+\tau^{1/2}),  \quad \gamma\in [0,1).
\end{align*}
Combining this inequality with the estimation  
$\|X_0-\PP_h X_0\|_{L_\omega^p L_\xi^2}
\le C h^{1+\gamma} \|X_0\|_{1+\gamma}$,
$\gamma\in [0,1]$, completes the proof of \eqref{err-aux} with $\gamma\in [0,1)$. 

To show \eqref{err-aux} for $\gamma=1$, we just need to give refined estimations for the $J^n _{22}$ and $J^n _{32}$ when $X_0 \in \dot H^2$ and \eqref{g2} holds.
Applying Minkovski inequality and using \eqref{eht1} with $\mu=2$ and $\nu=\beta\in (0,1/2)$, the embedding $\dot H^2 \hookrightarrow \dot H^\beta \cap L_\xi^\infty$, the estimate \eqref{reg-u}, and the fact 
$\|F(u)\|_\beta \le C (1+\|u\|^{q+1}_{L_\xi^\infty} +\|u\|^{q+1}_\beta)$ for all
$u\in \dot H^\beta\cap L_\xi^\infty$ (see \cite[Lemma 3.2]{LQ21}),
we derive  
\begin{align} \label{j22+}
J^n _{22}
&\le C (1+\|X_0\|_2^{(q+1)^2}) (h^2+\tau).
\end{align}
On the other hand, \eqref{eht1} with $\mu=2$ and $\nu=1+\theta$, \eqref{g2}, and \eqref{reg-u} imply that 
\begin{align*} 
J^n _{32}
& \le C (1+\|X_0\|^{q+1}_2 ) (h^2+\tau).
\end{align*}
This completes the proof.
\end{proof}

\subsection{Error estimate in multiplicative noise case}

Combining Lemma \ref{lm-aux} with a variational approach for monotone SPDEs, we have the following uniform-in-time strong convergence rate between the solution $X$ of  Eq. \eqref{see} and the solution $\{Y_n^h\}$ of the tamed-FEM \eqref{t-fem}.

\begin{tm}  \label{tm-err}
Let $p \ge 1$, $X_0 \in \dot H^{1+\gamma} \cap L_\xi^{2(3q+1)}$ with $\gamma\in [0,1]$, Assumptions \ref{ap-f}-\ref{ap-g}(1) (and (2) if $\gamma=1$) hold with $\lambda_1>K_3+\frac12\max\{ (2p-1)K_4^2, (2p(3q+1)(q+1)-1)K_6\}$. 
There exist positive constants $C$ and $\tau_3<1$ such that for any $h \in (0,1)$ and $\tau \in (0,\tau_3)$,
\begin{align} \label{err} 
& \sup_{n \in \nn_+} \|X(t_n) - Y_n^h \|_{L_\omega^{2p} L_\xi^2} 
 \le C (1 +\|X_0\|_{1+\gamma} + \|X_0\|_{L_\xi^{2(3q+1)}})^{(q+1)(4q+1)} (h^{1+\gamma}+\tau^{1/2} ).
\end{align} 
\end{tm}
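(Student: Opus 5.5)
We split $X(t_n)-Y_n^h=(X(t_n)-{\widehat Y}_n^h)+({\widehat Y}_n^h-Y_n^h)$. The first piece is controlled uniformly in $n$ by Lemma \ref{lm-aux}, with $2p$ in place of $p$; the assumption on $\lambda_1$ covers this. So it suffices to bound $e_n:={\widehat Y}_n^h-Y_n^h\in V_h$ uniformly in $n$ in $L_\omega^{2p}L_\xi^2$. Subtracting \eqref{t-fem} from \eqref{aux+} gives the variational recursion
\begin{align*}
e_{n+1}-\tau A_h e_{n+1}=e_n+\tau \PP_h\big[F_\tau(X(t_n))-F_\tau(Y_n^h)\big]+\PP_h\big[G(X(t_n))-G(Y_n^h)\big]\delta_n W,
\end{align*}
with $e_0=0$.

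\textbf{Step 1: one-step estimate.} Test with $e_{n+1}$ and use the Poincar\'e inequality, as in the proof of Proposition \ref{lm-reg-aux} but in $H$ rather than $\dot H^1$. This gives $(1+2\tau\lambda_1)\|e_{n+1}\|^2\le \|e_n+\tau\PP_h\Delta F_n+\PP_h\Delta G_n\delta_nW\|^2$, where $\Delta F_n:=F_\tau(X(t_n))-F_\tau(Y_n^h)$ and $\Delta G_n:=G(X(t_n))-G(Y_n^h)$. Next apply the It\^o argument behind \eqref{e_j} in the $L_\xi^2$-norm with $2p$-th moments. This yields
\begin{align*}
(1+2\tau\lambda_1)^p\ee_n\|e_{n+1}\|^{2p}\le\big(\|e_n+\tau\PP_h\Delta F_n\|^2+(2p-1)\tau\|\Delta G_n\|_{\LL_2^0}^2\big)^p.
\end{align*}

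\textbf{Step 2: handling the nonlinearity.} Write $X(t_n)-Y_n^h=e_n+\rho_n$ with $\rho_n:=X(t_n)-{\widehat Y}_n^h$. Then $\|e_n+\tau\PP_h\Delta F_n\|^2\le\|e_n\|^2+2\tau\langle \Delta F_n,e_n\rangle+\tau^2\|\Delta F_n\|^2$. Adding and subtracting $\rho_n$, we apply \eqref{tauftau} to the pair $(X(t_n),Y_n^h)$. The leftover cross terms are $\tau\langle\Delta F_n,\rho_n\rangle$ and $\tau\|\Delta F_n\|\,\|\rho_n\|$-type terms. In the cross term we insert $F_\tau({\widehat Y}_n^h)$ and use \eqref{tauftau} for the pair $({\widehat Y}_n^h,Y_n^h)$, which gives a bound $(K_3+O(\tau))\|e_n\|^2$. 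The remainder $\tau\langle F_\tau(X(t_n))-F_\tau({\widehat Y}_n^h),e_n\rangle$ is bounded via \eqref{Ftau-}, using the $\dot H^1$-duality, by
\begin{align*}
\tau\,C(1+\|X(t_n)\|_1^q+\|{\widehat Y}_n^h\|_1^q)\|\rho_n\|\,\|e_n\|_1.
\end{align*}
For the factor $\|e_n\|_1$ we use the extra dissipation $\tau\|\nabla e_{n+1}\|^2$ that comes from testing before applying Poincar\'e fully. Alternatively we bound it crudely through $\|{\widehat Y}_n^h\|_1+\|Y_n^h\|_1$ via Proposition \ref{lm-reg-aux}. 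For the noise term we use \eqref{g-lip} and Young: $\|\Delta G_n\|^2_{\LL_2^0}\le K_4^2(1+\epsilon)\|e_n\|^2+C_\epsilon\|\rho_n\|^2$.

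\textbf{Step 3: closing the recursion.} Collecting terms, we get
\begin{align*}
\|e_n+\tau\PP_h\Delta F_n\|^2+(2p-1)\tau\|\Delta G_n\|^2\le\big(1+(2K_3+(2p-1)K_4^2+\epsilon)\tau+C\tau^2\big)\|e_n\|^2+\tau R_n,
\end{align*}
where $R_n$ is a product of $\|\rho_n\|^2$ (or $\|\rho_n\|\,\|e_n\|$) with polynomial moments of $X$, ${\widehat Y}^h$ and $Y^h$. Raising to the $p$-th power and using Young as in the proof of \eqref{reg-ynh}, the condition $\lambda_1>K_3+\tfrac{2p-1}2K_4^2$ gives a contraction
\begin{align*}
\ee\|e_{n+1}\|^{2p}\le(1-c\tau)\ee\|e_n\|^{2p}+C\tau\,\ee R_n^p.
\end{align*}
We then bound $\ee R_n^p$ by H\"older. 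The first factor is $\sup_n\|\rho_n\|_{L_\omega^{4p}}^{2p}$, which Lemma \ref{lm-aux} makes of order $(h^{1+\gamma}+\tau^{1/2})^{2p}$. The second factor consists of the uniform moments \eqref{reg-u}, \eqref{reg-aux} and \eqref{reg-ynh}, which contribute the powers $\|X_0\|^{q(q+1)}$ and hence the total exponent $(q+1)(4q+1)$. Iterating the geometric recursion gives a bound independent of $n$. Finally $\tau_3$ is the minimum of $\tau_1$, $\tau_2$ and the threshold needed for $c>0$.

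The main obstacle is Step 2: bounding the superlinear cross term with only $\|\rho_n\|$ in $L^2$ while keeping exactly $K_3$ (not a $\tau^{-1/2}$-size Lipschitz constant of $F_\tau$) in the contraction rate. I would first try the $\dot H^{-1}$-duality route combined with the retained gradient dissipation $\tau\|\nabla e_{n+1}\|^2$. If shifting the index from $n+1$ to $n$ fails, I would absorb $\|e_n\|_1$ by H\"older into the uniform $\dot H^1$-moments, at the cost of the higher exponent appearing in \eqref{err}.
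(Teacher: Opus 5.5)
\paragraph{Comparison with the paper's proof.} Your overall architecture matches the paper's. You split off $\rho_n:=X(t_n)-\widehat Y_n^h$ via Lemma \ref{lm-aux} and run a $2p$-th moment recursion for $e_n$ using the It\^o trick behind \eqref{e_j}. You use \eqref{tauftau} on the pair $(\widehat Y_n^h,Y_n^h)$ to keep exactly $K_3+K_3^2\tau$, split the noise as $(1+\delta)K_4^2\|e_n\|^2+C\|\rho_n\|^2$, and close with Young as in \eqref{ej-y}.

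\paragraph{The gap is in Step 2.} You flag this step yourself but leave it unresolved. Because you square the whole explicit quantity $e_n+\tau\PP_h\Delta F_n$, the piece $D_n:=F_\tau(X(t_n))-F_\tau(\widehat Y_n^h)$ ends up paired with the old iterate $e_n$. The duality bound \eqref{Ftau-} then produces $\|e_n\|_1$, which the current step's dissipation $\tau\|\nabla e_{n+1}\|^2$ does not control.

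Your fallback does not work. Bounding $\|e_n\|_1\le\|\widehat Y_n^h\|_1+\|Y_n^h\|_1$ through Proposition \ref{lm-reg-aux} makes the source term of order $\tau\|\rho_n\|$ instead of $\tau\|\rho_n\|^2$. Iterating then gives only $\ee\|e_n\|^2\lesssim\sup_k\|\rho_k\|_{L^2_\omega L^2_\xi}$, i.e. the halved rate $(h^{1+\gamma}+\tau^{1/2})^{1/2}$. The loss is in the order of convergence, not in the power of $\|X_0\|$.

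Your first variant, applying \eqref{tauftau} to $(X(t_n),Y_n^h)$, fails similarly. The leftover $\tau\|\Delta F_n\|\,\|\rho_n\|$ has $\|\Delta F_n\|$ controlled in $L^2_\xi$ only by $C\tau^{-1/2}\|X(t_n)-Y_n^h\|$. After Young this leaves a source $C\|\rho_n\|^2$ without the factor $\tau$.

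You also never bound the $\tau^2\|D_n\|^2$ contained in $\tau^2\|\PP_h\Delta F_n\|^2$. It is harmless only because $|f_\tau'|\le C\tau^{-1/2}$ by \eqref{bound-ftau}, which gives $C\tau\|\rho_n\|^2$.

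\paragraph{The missing idea.} The paper pairs $D_n$ with the implicit iterate $e_{n+1}$. Test the error equation with $e_{n+1}$ and use \eqref{ab}. Write the right-hand side as $2\tau\,{}_{-1}\langle D_n,e_{n+1}\rangle_1$ plus the pairing of
$B_n:=\tau[F_\tau(\widehat Y_n^h)-F_\tau(Y_n^h)]+[G(X(t_n))-G(Y_n^h)]\delta_nW$
with $e_{n+1}=e_n+(e_{n+1}-e_n)$. The bound $2\tau\|D_n\|_{-1}\|e_{n+1}\|_1\le\varepsilon\tau\|\nabla e_{n+1}\|^2+C\tau\|D_n\|_{-1}^2$ is then absorbed within the same step, yielding
\[
[1+2(\lambda_1-\varepsilon)\tau]\|e_{n+1}\|^2\le\|e_n+B_n\|^2+C\tau\|D_n\|_{-1}^2 ,
\]
which is \eqref{en+} up to an index shift. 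Only $\widehat Y_n^h$ and $Y_n^h$ enter the squared term, so the It\^o trick and \eqref{tauftau} give the contraction directly. No $L^2_\xi$-bound on $D_n$ is needed, and the source term $C\tau(1+\|X(t_n)\|_1^q+\|\widehat Y_n^h\|_1^q)^{2p}\|\rho_n\|^{2p}$ is of the required order in $\rho_n$.

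Your option (a) can also be rescued, but only by changing the Lyapunov functional. Keep $2\alpha\tau\|\nabla e_{n+1}\|^2$ out of the Poincar\'e step and run the recursion on $\Psi_n:=\|e_n\|^2+\alpha\tau\|\nabla e_n\|^2$, with small $\alpha$ and $\tau\lambda_1\le 1/2$. The term $2\tau\,{}_{-1}\langle D_n,e_n\rangle_1\le\alpha\tau\|\nabla e_n\|^2+C\tau\|D_n\|_{-1}^2$ is then absorbed into $\Psi_n$. As written, however, neither route is carried out.
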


\begin{proof}
Let $n \in \nn_+$ and denote  
$e_n^h:={\widehat Y}_n^h -Y_n^h$. 
Then $e_n^h  \in V_h$ with vanishing initial datum $e^h_0=0$.
In terms of \eqref{aux+} and \eqref{t-fem}, it is clear that  
\begin{align*}
& e_n^h - e_{n-1}^h - \tau A_h e_n^h \\
& = \tau \PP_h [F_\tau(X(t_{n-1}))-F_\tau(Y_{n-1}^h)]  
+\PP_h [G(X(t_{n-1}))-G(Y_{n-1}^h)] \delta_{n-1} W.
\end{align*}
Testing with $e_n^h$ on both sides of the above equation and the elementary equality
\begin{align}\label{ab}
	2\<a-b,a\>=|a|^2-|b|^2+|a-b|^2, 
\end{align}
for any $a,b$ in an arbitrary Hilbert space, we obtain 
\begin{align*}
& \|e_n^h\|^2-\|e_{n-1}^h\|^2 + \|e_n^h -e_{n-1}^h\|^2 +2 \tau \|\nabla e_n^h  \|^2\\
& =  2\tau {}_{-1}\<F_\tau(X(t_{n-1}))-F_\tau({\widehat Y}_{n-1}^h), e_n^h\>_1
+ 2\tau {}_{-1}\< F_\tau({\widehat Y}_{n-1}^h)-F_\tau(Y_{n-1}^h), e_{n-1}^h\>_1 \\
 & + 2\<e_n^h  -e_{n-1}^h, \tau [F_\tau({\widehat Y}_{n-1}^h)-F_\tau(Y_{n-1}^h)]+[G(X(t_{n-1}))-G(Y_{n-1}^h)]\delta_{n-1} W\>\\
 & + 2\<e_{n-1}^h , (G(X(t_{n-1}) )-G(Y_{n-1}^h))\delta_{n-1} W\>.
\end{align*} 
By the Poincar\'e inequality \eqref{poin} and Young inequality, for any small $\varepsilon>0$, we have 
\begin{align}\label{en+}
	& [1+2(\lambda_1-\varepsilon)\tau]\|e_n^h\|^2
	\le \|R_{n-1}\|^2
	+ C \tau \|F_\tau(X(t_{n-1}))-F_\tau({\widehat Y}_{n-1}^h)\|_{-1}^2,
\end{align} 
with $R_{n-1}:=e_{n-1}^h+\tau [F_\tau({\widehat Y}_{n-1}^h)-F_\tau(Y_{n-1}^h)] + [G(X(t_{n-1}))-G(Y_{n-1}^h)]\delta_{n-1} W$.

For any integer $p \ge 1$, taking the $p$-th power and $\ee_{n-1}$, we obtain
\begin{align*}
	& \ee_{n-1}\|e^h_n\|^{2p}
	\le \frac{(1+\varepsilon\tau)}{[1+ 2 (\lambda_1-\varepsilon) \tau]^p} \ee_{n-1}\|R_{n-1}\|^{2p}
	+ C \tau \|F_\tau(X(t_{n-1}))-F_\tau({\widehat Y}_{n-1}^h)\|_{-1}^{2p}.
\end{align*}
By an argument similar to \eqref{e_j}, we combine \eqref{tauftau} with \eqref{g-lip} to bound the diffusion term as $\|G(X(t_{n-1}))-G(Y_{n-1}^h)\|_{\LL_2^0}^2\le (1+\delta)K_4^2\|e_{n-1}^h\|^2+C \|X(t_{n-1})-{\widehat Y}_{n-1}^h\|^2$ for any $\delta>0$, and then use \eqref{Ftau-}, \eqref{reg-u}, \eqref{reg-aux}, \eqref{err-aux} and H\"older inequality to derive 
\begin{align*}
	\ee_{n-1}\|e^h_n\|^{2p}
	& \le \frac{C}{(1+ 2 (\lambda_1-\varepsilon) \tau)^p}\sum_{r=0}^p \mathcal H_r(\tau)\|e_{n-1}^h\|^{2r}\|X(t_{n-1})-{\widehat Y}_{n-1}^h\|^{2p-2r}\\
	& +C\tau (1 + \|X_0\|^{(q+1)(4q+1)}_{1+\gamma} + \|X_0\|_{L_\xi^{2(3q+1)}}^{(q+1)(4q+1)})^{2p} (h^{1+\gamma}+\tau^{1/2})^{2p},
\end{align*}
where 
$\mathcal H_r(\tau):=(1+\varepsilon\tau)C_p^r (1+(2K_3+2K_3^2\tau+(2p-1)(1+\delta)K_4^2)\tau)^r \tau^{p-r}.$
Similarly to \eqref{ej-y}, we have
\begin{align*}
    \ee_{n-1}\|e^h_n\|^{2p} 
	& \le \Big(1-\frac{{\widetilde\Phi}(\tau)}{2(1+2(\lambda_1-\varepsilon)\tau)^p}\Big)\|e_{n-1}^h\|^{2p}+{\widetilde C}(\tau)\|X(t_{n-1})-{\widehat Y}_{n-1}^h\|^{2p}\\
	& +C\tau (1 + \|X_0\|^{(q+1)(4q+1)}_{1+\gamma} + \|X_0\|_{L_\xi^{2(3q+1)}}^{(q+1)(4q+1)})^{2p} (h^{1+\gamma}+\tau^{1/2})^{2p},
\end{align*}
where ${\widetilde\Phi}(\tau):= (1+2(\lambda_1-\varepsilon)\tau)^p - \mathcal H_p(\tau)$ and ${\widetilde C}(\tau)=(1+2(\lambda_1-\varepsilon)\tau)^{-p} [\mathcal H_0(\tau) + \sum_{r=1}^{p-1}\frac{p-r}{p}(\frac{2r(p-1)}{p{\widetilde\Phi}(\tau)})^{\frac{r}{p-r}}\mathcal H_r(\tau)^{\frac{p}{p-r}}]$, with the convention that the sum vanishes if $p=1$.
Similarly to \eqref{Phi} and \eqref{Ctau}, for any $\tau \in (0, \tau_3')$, with $\tau_3':=\min\{1, (\lambda_1-K_3-\frac{2p-1}{2}K_4^2) K_3^{-2}\} \in (0, 1]$ as $\lambda_1>K_3+\frac{2p-1}{2}K_4^2$, and for sufficiently small $\varepsilon,\delta > 0$, there exist positive constants $c_3$ and $c_4$ such that 
${\widetilde \Phi}(\tau)>c_3\tau$ and ${\widetilde C}(\tau) \le c_4\tau$.
Consequently, 
\begin{align*}
    \ee_{n-1}\|e^h_n\|^{2p}
	& \le   (1-c_3\tau) \|e_{n-1}^h\|^{2p} + c_4 \tau \|X(t_{n-1})-{\widehat Y}_{n-1}^h\|^{2p}\\
	& +C\tau (1 + \|X_0\|^{(q+1)(4q+1)}_{1+\gamma} + \|X_0\|_{L_\xi^{2(3q+1)}}^{(q+1)(4q+1)})^{2p} (h^{1+\gamma}+\tau^{1/2})^{2p}.
\end{align*}
Iterating this inequality, taking expectation, and using \eqref{err-aux}, we derive
\begin{align*}
\sup_{n\in\nn_+}\|e_n^h\|_{L_\omega^{2p} L_\xi^2}
\le C (1 + \|X_0\|^{(q+1)(4q+1)}_{1+\gamma}
+ \|X_0\|_{L_\xi^{2(3q+1)}}^{(q+1)(4q+1)})
(h^{1+\gamma}+\tau^{1/2}).
\end{align*}
Combining this estimate with \eqref{err-aux} and triangle inequality,
we conclude \eqref{err}.  
\end{proof}

\subsection{Error estimate in additive noise case}

In this part, our main purpose is to establish a uniform-in-time high-order convergence result in the additive noise case, i.e., the diffusion coefficient of Eq. \eqref{see} is a constant operator $G(\cdot) \equiv G$.
 In this case, the conditions \eqref{g-lip}-\eqref{g1} are equivalent to the assumption $G\in \LL_2^1$ which was imposed in \cite{CHS21, QW26} as $\|(-A)^{1/2}G\|_{\LL_2^0}<\infty$.

 \begin{tm}  \label{tm-err-} 
Let $p \ge 1$, $X_0 \in \dot H^{1+\gamma} \cap L_\xi^{2(3q+1)}$ with $\gamma\in [0,1]$, $G(\cdot) \equiv G \in \LL_2^1$ (and $G \in \LL_2^{1+\theta}$ for some $\theta \in (0, 1)$ if $\gamma=1$), and Assumption \ref{ap-f} hold with $\lambda_1>K_3$. 
There exist constants $C>0$ and $\tau_3\in(0,1)$ such that for any $h \in (0,1)$ and $\tau \in (0,\tau_3)$, 
\begin{align} \label{err-} 
& \sup_{n \in \nn_+} \|X(t_n) - Y_n^h \|_{L_\omega^{2p} L_\xi^2} 
\le C (1 +\|X_0\|_{1+\gamma} + \|X_0\|_{L_\xi^{2(3q+1)}})^{(q+1)(4q+1)} (h^{1+\gamma}+\tau^\frac{1+\gamma}2 ).
\end{align} 
 \end{tm}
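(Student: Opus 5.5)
The proof follows the same two-step route as Theorem \ref{tm-err}: first compare $X(t_n)$ with the auxiliary process ${\widehat Y}_n^h$ from \eqref{aux}, then compare ${\widehat Y}_n^h$ with $Y_n^h$ through the discrete variational (energy) argument. The only thing that needs upgrading is the first step. With additive noise, the error recursion for $e_n^h={\widehat Y}_n^h-Y_n^h$ has no stochastic forcing at all, because $\PP_h G\delta_{n-1}W$ cancels. The proof of Theorem \ref{tm-err} then goes through pathwise. The condition $\lambda_1>K_3+\frac{2p-1}{2}K_4^2$ reduces to $\lambda_1>K_3$ since $K_4=0$, and the moment requirements involving $K_6$ disappear because $\|G\|_{\LL_2^1}$ is a fixed constant. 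So $\sup_n\|e_n^h\|_{L^{2p}_\omega L^2_\xi}$ is controlled by $\sup_n\|X(t_n)-{\widehat Y}_n^h\|_{L^{2p}_\omega L^2_\xi}$, with the same polynomial dependence on $X_0$. It therefore suffices to prove Lemma \ref{lm-aux} with the rate $h^{1+\gamma}+\tau^{\frac{1+\gamma}2}$ in place of $h^{1+\gamma}+\tau^{1/2}$.

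For that refined auxiliary estimate I keep the splitting $J^n_1+J^n_2+J^n_3$ from \eqref{j}. The term $J_1^n$ already has the right rate by \eqref{eht1}. In $J_3^n$ the difference $G(X(r))-G(X(t_i))$ vanishes, so only $J_{32}^n$ remains. For it I use \eqref{eht2} with $y=Gu$, $\theta=\gamma$ when $\gamma<1$, and with $\theta$ from $G\in\LL_2^{1+\theta}$ combined with \eqref{eht1} when $\gamma=1$. Either way I get $(h^{1+\gamma}+\tau^{\frac{1+\gamma}2})\|G\|$. The terms $J_{22}^n$ and $J_{23}^n$ are already of order $h^{1+\gamma}+\tau^{\frac{1+\gamma}2}$ by \eqref{j22}, \eqref{j22+}, and \eqref{j23}, since $\tau\le\tau^{\frac{1+\gamma}2}$.

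The only genuinely new work is $J_{21}^n$, the time-discretization error of the drift. Bounding it crudely via the H\"older regularity gives only $\tau^{1/2}$. Instead I expand
\[
F(X(r))-F(X(t_i))=F'(X(t_i))(X(r)-X(t_i))+R_i(r),
\]
where the remainder $R_i(r)$ is quadratic in $X(r)-X(t_i)$ with weight $f''$, controlled by \eqref{f-grow}. The remainder contributes $\|X(r)-X(t_i)\|^2_{L^{4p}_\omega L^4_\xi}$-type quantities, which are $O(\tau)$ by \eqref{reg-u} with $\beta=1$ and Sobolev embedding. In the linear term I use the mild formula to decompose
\[
X(r)-X(t_i)=(S(r-t_i)-{\rm Id})X(t_i)+\int_{t_i}^r S(r-s)F(X(s))\,{\rm d}s+\int_{t_i}^r S(r-s)G\,{\rm d}W(s).
\]
The first piece is $O(\tau^{\frac{1+\gamma}2})$ in $L^2_\xi$ by \eqref{ana-hol} and \eqref{reg-u}. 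The second is $O(\tau)$. The third is a stochastic integral over $[t_i,r]$ with $r\le t_{i+1}$. Written as $\int_{t_i}^r S(t_n-s')\ldots$ inside the outer sum, it can be rearranged by a stochastic Fubini argument into a single It\^o integral over $[0,t_n]$. Applying the BDG inequality to that integral gains a square-root summation, giving order $\tau$ rather than $\tau^{1/2}$. Throughout, the semigroup weight $e^{-c(t_n-r)}(t_n-r)^{-1/2}$ from \eqref{ana} (or the $\dot H^{-1}$ duality) keeps every sum uniform in $n$.

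I expect this last step to be the main obstacle. Making the stochastic-integral term in $J_{21}^n$ uniform in time requires controlling $\ee\|\sum_i\int_{t_i}^{t_{i+1}}S(t_n-r)F'(X(t_i))\int_{t_i}^rG\,{\rm d}W\,{\rm d}r\|^{2p}$ with exponential decay in $t_n-r$, together with the factor $F'(X(t_i))$, which grows like $|X|^q$. I would handle it by moving $F'(X(t_i))$ into $\dot H^{-1}$ via the embedding \eqref{emb}, then using the $\FFF_{t_i}$-measurability of $F'(X(t_i))$ and the $\dot H^1$-moment bounds from \eqref{reg-u} to close the estimate with the stated power of $(1+\|X_0\|_{1+\gamma}+\|X_0\|_{L^{2(3q+1)}_\xi})$.
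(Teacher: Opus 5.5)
Your overall route is the paper's. The additive noise cancels in the recursion for $e_n^h={\widehat Y}_n^h-Y_n^h$, and the contraction factor $\frac{1+2\tau(K_3+K_3^2\tau)}{1+2(\lambda_1-\varepsilon)\tau}$ needs only $\lambda_1>K_3$. Everything then reduces to improving $J_{21}^n$ and $J_{32}^n$ in the auxiliary estimate. One harmless imprecision: because of the factor $1+\|X\|_1^q+\|{\widehat Y}\|_1^q$ from \eqref{Ftau-}, H\"older's inequality forces you to control the auxiliary error in $L^{4p}_\omega$, not $L^{2p}_\omega$.

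The step that fails as written is the Taylor remainder. With $\beta=1$, \eqref{reg-u} gives only the time-H\"older exponent $\min\{\gamma,1\}/2=\gamma/2$ in $\dot H^1$. Hence $\|X(r)-X(t_i)\|^2_{L^{4p}_\omega L^4_\xi}\le C\|X(r)-X(t_i)\|^2_{L^{4p}_\omega\dot H^1}=O(\tau^{\gamma})$, not $O(\tau)$. This is below the target $\tau^{\frac{1+\gamma}2}$ for every $\gamma<1$, and gives no rate at all when $\gamma=0$. Lowering the index just enough to reach $L^4_\xi$ does not save it in $d=3$: $\dot H^{3/4}\hookrightarrow L^4_\xi$ yields only $\tau^{\min\{1/4+\gamma,1\}}$, which is too weak for $\gamma<1/2$.

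The missing idea is to push the remainder into a very weak space and let the semigroup absorb the loss. The paper uses $L^1_\xi\hookrightarrow\dot H^{-\delta}$ with $\delta\in(3/2,2)$ (see \eqref{l1}), together with the still integrable weight $(t_n-r)^{-\delta/2}e^{-c(t_n-r)}$. It then splits $L^1_\xi$ as $L^3_\xi\cdot L^3_\xi\cdot L^3_\xi$ and measures the increment in $\dot H^{1/2}\hookrightarrow L^3_\xi$, where \eqref{reg-u} gives exponent $\min\{1/2+\gamma,1\}/2$. The result, \eqref{j214}, is $\tau^{\min\{1/2+\gamma,1\}}$. This is genuinely not $O(\tau)$ for $\gamma<1/2$, but it is $\le\tau^{\frac{1+\gamma}2}$, which is all that is needed.

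On the stochastic piece, which you flag as the main obstacle, expanding at $X(t_i)$ is the right choice. It is cleaner than the paper, which expands at $X(r)$ and so carries the factor $DF(X(r))$, not $\FFF_\sigma$-measurable for $\sigma<r$, into its stochastic Fubini and BDG step. With your expansion, the integrand $\int_\sigma^{t_{i+1}}S(t_n-r)F'(X(t_i))S(r-\sigma)G\,{\rm d}r$, $\sigma\in[t_i,t_{i+1})$, is adapted, and BDG applies directly.

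However, do not put $F'(X(t_i))$ into $\dot H^{-1}$ against $\|G\|_{\LL_2^0}$ as you propose. The weight $(t_n-r)^{-1/2}$, squared inside the BDG sum, gives a bound of order $\tau^2\sum_{j=1}^{n-1}j^{-1}e^{-cj\tau}\sim\tau^2|\log\tau|$, i.e. only $\tau|\log\tau|^{1/2}$. That misses the rate $\tau$ required at $\gamma=1$. Instead, use $G\in\LL_2^1$ and keep the product in $L^2_\xi$: for $d\le3$ and $q\le2$,
$\|f'(X)v\|\le\|f'(X)\|_{L^3_\xi}\|v\|_{L^6_\xi}\le C(1+\|X\|_1^q)\|v\|_1$.
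Then only the factor $e^{-c(t_n-r)}$ appears, and you get $O(\tau)$ uniformly in $n$, as in \eqref{j213}.
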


\begin{proof}
Let $p \ge 2$ and $\gamma \in [0,1]$.
As in the proof of Theorem \ref{tm-err}, following an argument similarly to \eqref{en+}, and utilizing \eqref{Ftau-}, we obtain
\begin{align*}
\|e_n^h  \|^2
& \le C \tau (1+\|X(t_{n-1})\|^q_1+\|\widehat Y_{n-1}^h\|^q_1)^2\|X(t_{n-1})-\widehat Y_{n-1}^h\|^2 \\
& \quad + \frac{1+2\tau(K_3+K_3^2\tau)}{1+2(\lambda_1-\varepsilon) \tau}\|e_{n-1}^h\|^2.
\end{align*}
We note that $\frac{1+2\tau(K_3+K_3^2\tau)}{1+2(\lambda_1-\varepsilon) \tau} < 1$ for any $\tau \in (0, \frac{\lambda_1-K_3}{K_3^2})$, ensured by the condition $\lambda_1> K_3$ for sufficiently small $\varepsilon > 0$.
Consequently, we use \eqref{reg-ynh} and \eqref{reg-aux} to derive 
\begin{align}
\sup_{n\in\mathbb N_+}\|e_n^h\|_{L_\omega^{2p}L_\xi^2}
&\le C (1+\|X\|_{L_t^\infty L_\omega^{4pq}\dot H^1}^q
+\sup_{k\in\mathbb N_+}
\|\widehat Y_k^h\|_{L_\omega^{4pq}\dot H^1}^q)
\sup_{k\in\mathbb N_+}
\|X(t_k)-\widehat Y_k^h\|_{L_\omega^{4p}L_\xi^2}.
\label{e-aux-uniform}
\end{align}
Therefore, it suffices to show the following refined estimation of $\sup_{k\in\mathbb N_+}
\|X(t_k)-\widehat Y_k^h\|_{L_\omega^{4p}L_\xi^2}$ with $G(\cdot)\equiv G$.
According to the proof in Lemma \ref{lm-aux}, we only need to give refined estimations for $J^n  _{21}$ and $J^n_3$.

To show a refined estimation of $J^n  _{21}$, we use Taylor formula to split $J^n  _{21}$ into 
\begin{align*} 
J^n  _{21} 
&\le \Big \|\sum_{i=0}^{n-1} \int_{t_i}^{t_{i+1}} S(t_n-r) 
DF(X(r)) (S(r-t_i)-{\rm Id}) X(t_i) {\rm d}r \Big\|_{L_\omega^p L_\xi^2} \\
&\quad + \Big \|\sum_{i=0}^{n-1} \int_{t_i}^{t_{i+1}} S(t_n-r) 
DF(X(r)) \Big(\int_{t_i}^r S(r-\sigma) F(X(\sigma)) {\rm d}\sigma\Big) {\rm d}r \Big\|_{L_\omega^p L_\xi^2} \\
&\quad + \Big \|\sum_{i=0}^{n-1} \int_{t_i}^{t_{i+1}} S(t_n-r) 
DF(X(r)) \Big(\int_{t_i}^r S(r-\sigma) G {\rm d}W(\sigma) \Big)  {\rm d}r \Big\|_{L_\omega^p L_\xi^2} \\
&\quad +\Big \|\sum_{i=0}^{n-1} \int_{t_i}^{t_{i+1}} S(t_n-r) 
R_F(X(r), X(t_i)) {\rm d}r \Big\|_{L_\omega^p L_\xi^2}
=:\sum_{i=1}^4 J^n  _{21i}, 
\end{align*}
where $R_F$ denotes the remainder term
\begin{align*} 
& R_F(X(r), X(t_i)) \\
&:=\int_0^1 D^2 F (X(r)+\lambda (X(t_i)-X(r)) )
 (X(t_i)-X(r), X(t_i)-X(r)) (1-\lambda) {\rm d}\lambda \\
&=\int_0^1 f'' (X(r)+\lambda (X(t_i)-X(r)) )
 (X(t_i)-X(r))^2 (1-\lambda) {\rm d}\lambda.
\end{align*}
We shall estimate $J^n  _{21i}$, $i=1,2,3,4$, successively.

Since for $\delta\in (3/2, 2)$, $\dot H^\delta\hookrightarrow \CC$, it follows by the dual argument that
\begin{align} \label{l1} 
\|x\|_{-\delta}\le C \|x\|_{L_\xi^1},
\quad x\in L_\xi^1.
\end{align}
This inequality, in conjunction with Minkovski and Cauchy--Schwarz inequalities, \eqref{f-grow}, \eqref{emb}, \eqref{ana} with $(\mu, \nu)=(\delta, 0)$, and \eqref{ana-hol} with $\rho=1+\gamma$, yields that 
\begin{align} \label{j211} 
J^n  _{211} 
 &\le \sum_{i=0}^{n-1} \int_{t_i}^{t_{i+1}} (t_n-r)^{-\frac\delta2} \|DF(X(r)) (S(r-t_i)-{\rm Id}) X(t_i)\|_{L_\omega^p \dot H^{-\delta}} {\rm d}r \nonumber \\
&\le C \tau^\frac{1+\gamma}2(1+\|X\|_{L_t^\infty L_\omega^{2pq} \dot H^1}^q) \|X\|_{L_t^\infty L_\omega^{2pq} \dot H^{1+\gamma}},
\quad \gamma \in [0, 1].
\end{align}
A similar argument implies that 
\begin{align} \label{j212} 
J^n  _{212} 
&\le C \tau (1+\|X\|_{L_t^\infty L_\omega^{2pq} \dot H^1}^{q+1}).
\end{align}

To estimate the third term $J^n  _{213}$, we apply stochastic Fubini theorem, the discrete and continuous BDG inequalities, and Cauchy--Schwarz inequality to derive 
\begin{align*}
(J^n  _{213})^2 
&=\Big\| \sum_{i=0}^{n-1} \int_{t_i}^{t_{i+1}} \int_{t_i}^{t_{i+1}} 
\chi_{[t_i,r)}(\sigma) S(t_n-r) 
DF(X(r)) S(r-\sigma) G {\rm d}r {\rm d}W(\sigma) \Big\|_{L_\omega^p L_\xi^2}^2 \\ 
&\le C \tau \sum_{i=0}^{n-1} 
\Big(\int_{t_i}^{t_{i+1}} \|f'(X(r))\|^2_{L_\omega^p \dot H^{-1}}  {\rm d}r \Big)
\Big(\int_0^\tau  \|S(\sigma) G\|_{\LL_2^1}^2  {\rm d}\sigma \Big).
\end{align*}
This, together with the condition \eqref{f-grow} and the assumption $G\in \LL_2^1$, shows that 
\begin{align} \label{j213}
J^n  _{213} \le C \tau (1+\|X\|_{L_t^\infty L_\omega^{pq} \dot H^1}^q).
\end{align}

Finally, by \eqref{ana} with $(\mu, \nu)=(\delta, 0)$, Minkovski and H\"older inequalities, the condition \eqref{f-grow}, and the embeddings \eqref{emb}, \eqref{l1}, $\dot H^{1/2} \hookrightarrow L_\xi^3$, and $\dot H^1 \subset L_\xi^{3(q-1)}$ as $3(q-1)\le 2(q+1)$ (for $q \le 2$) for $d=3$ and $\dot H^1 \subset L_\xi^p$ for any $p \in [1, \infty)$ for $d=1, 2$, we estimate the term $J^n  _{214}$ by
\begin{align} \label{j214}
J^n  _{214} 
&\le C \sup_{i \in \nn}  \int_0^1 \|f''(X+\lambda (X(t_i)-X) ) \|_{L_\omega^{3p} L_\xi^3} \|X(t_i)-X \|^2_{L_\omega^{3p} L_\xi^3}{\rm d}\lambda \nonumber \\
& \le C \tau^{\min\{1/2+\gamma, 1\}}(1+\|X\|^{q-1}_{L_t^\infty L_\omega^{3p(q-1)} \dot H^1} )\|X\|^2_{\CC_t^{\min\{1/2+\gamma, 1\}/2} L_\omega^{3p} \dot H^{1/2}}.
\end{align} 

Collecting the above four estimations \eqref{j211}-\eqref{j214} together, 
we have 
\begin{align} \label{j21}
J^n  _{21} 
& \le C \tau^\frac{1+\gamma}2 [1+\|X\|_{L_t^\infty L_\omega^{2pq} \dot H^1}^{q+1}
+ (1+\|X\|_{L_t^\infty L_\omega^{2pq} \dot H^1}^q) \|X\|_{L_t^\infty L_\omega^{2pq} \dot H^{1+\gamma}} \nonumber  \\
& \qquad \qquad + (1+\|X\|^{q-1}_{L_t^\infty L_\omega^{3p(q-1)} \dot H^1} )\|X\|^2_{\CC_t^{\min\{1/2+\gamma, 1\}/2} L_\omega^{3p} \dot H^{1/2}}] \nonumber \\
& \le C \tau^\frac{1+\gamma}2 (1+\|X_0\|^{q^2+3q+1}_{1+\gamma}). 
\end{align} 
Another term $J^n_3$ can be handled by BDG inequality and \eqref{eht2} with $\theta=1$:
\begin{align}  \label{j3+} 
J^n_3 = J^n_{32} = \Big\|\int_0^{t_n} 
E_{h,\tau}(t_n-r) G {\rm d}W(r) \Big \|_{L_\omega^p L_\xi^2} 
 \le C\|G \|_{\LL_2^1} (h^2+\tau).
\end{align}
Now we can conclude \eqref{err-} from the error estimates \eqref{j1}, \eqref{j21}, \eqref{j22}, \eqref{j22+},  \eqref{j23}, and \eqref{j3+} for $J^n_1$, $J^n  _{21}$, $J^n_{22}$, $J^n_{23}$, and $J^n_3$, respectively, and \eqref{reg-u}, \eqref{reg-u+}, and \eqref{reg-aux}.
\end{proof}

\section{Exponential Ergodicity of Tamed-FEM}
\label{sec5}

In this section, we establish exponential ergodicity of the tamed-FEM \eqref{t-fem} and derive an ergodic error estimate in the Wasserstein-2 distance.
 
It was shown in \cite[Theorem 3.3]{LS25} that, under the coercivity condition \eqref{f'} and the first growth condition in \eqref{f-grow} on $f$ and the Lipschitz condition \eqref{g-lip} on $g$, 
there exist positive constants $c_5$, $c_6$, and $\tau_5 \le 1$ such that for any $h \in (0, 1)$ and $\tau \in (0, \tau_5]$,  
  \begin{align} \label{lya} 
 \ee_{n-1} \|Y_n^h\|^2  
  \le  (1- c_5 \tau) \|Y_{n-1}^h\|^2+ c_6 \tau, \quad n \in \nn_+. 
  \end{align}    
Therefore, to obtain the exponential ergodicity of the tamed-FEM \eqref{t-fem}, it suffices to show the following exponential continuity estimate of \eqref{t-fem} with respect to the initial data as in \cite[Theorem 3.1]{Liu26}; here we denote by $\{Y_m^{h,x}: m \in \nn\}$ and $\{Y_m^{h,y}: m \in \nn\}$ the solutions of the tamed-FEM \eqref{t-fem} starting from $x$ and $y$, respectively.

\begin{lm}  
Let $x, y \in H$ and Assumptions \ref{ap-f}-\ref{ap-g}(1) hold with $\lambda_1>K_3+\frac{1}{2}K_4^2$. 
There exist positive constants $\kappa$ and $\tau_6<1$ such that for any $h \in (0, 1)$ and $\tau \in (0, \tau_6)$,
\begin{align} \label{con-ukh}
\ee \|Y_m^{h,x}-Y_m^{h,y}\|^2 \le e^{-\kappa t_m} \|x-y\|^2, 
\quad m \in \nn.
\end{align}  
\end{lm}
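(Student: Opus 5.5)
Set $e_m := Y_m^{h,x}-Y_m^{h,y}\in V_h$ and establish a one-step contraction in conditional mean square,
\begin{align*}
\ee_{m-1}\|e_m\|^2 \le (1-\kappa'\tau)\|e_{m-1}\|^2
\end{align*}
for some $\kappa'>0$ independent of $h$ and $\tau$. Iterating this bound and taking expectations then gives $\ee\|e_m\|^2\le (1-\kappa'\tau)^m\|x-y\|^2\le e^{-\kappa' t_m}\|x-y\|^2$, which is \eqref{con-ukh} with $\kappa=\kappa'$.

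First subtract the two copies of \eqref{t-fem}:
\begin{align*}
e_m-\tau A_h e_m = e_{m-1}+\tau \PP_h[F_\tau(Y_{m-1}^{h,x})-F_\tau(Y_{m-1}^{h,y})] + \PP_h[G(Y_{m-1}^{h,x})-G(Y_{m-1}^{h,y})]\delta_{m-1}W.
\end{align*}
Testing with $e_m$ and using \eqref{poin} for the discrete Laplacian on $V_h\subset\dot H^1$ gives $(1+2\lambda_1\tau)\|e_m\|^2\le\|R_{m-1}\|^2$. This is the same step as in the proof of \eqref{reg-ynh}, but now carried out in $L^2_\xi$. Here
\begin{align*}
R_{m-1}:=e_{m-1}+\tau\,\delta F+\delta G\,\delta_{m-1}W,
\end{align*}
with $\delta F$, $\delta G$ the differences of $F_\tau$ and $G$ at the two previous iterates. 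The $\PP_h$ can be dropped because $\PP_h$ is an $L^2$-contraction and $e_m\in V_h$.

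Next take $\ee_{m-1}$. The cross term with $\delta_{m-1}W$ has zero conditional mean, and Itô's isometry yields
\begin{align*}
\ee_{m-1}\|R_{m-1}\|^2=\|e_{m-1}+\tau\delta F\|^2+\tau\|\delta G\|_{\LL_2^0}^2 .
\end{align*}
The decisive ingredient is \eqref{tauftau}:
\begin{align*}
\|e_{m-1}+\tau\delta F\|^2=\|e_{m-1}\|^2+2\tau\bigl(\<\delta F,e_{m-1}\>+\tau\|\delta F\|^2\bigr)\le \bigl(1+2(K_3+K_3^2\tau)\tau\bigr)\|e_{m-1}\|^2 .
\end{align*}
The whole $\tau^2\|\delta F\|^2$ term is absorbed without any growth condition on the superlinear part. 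Combined with \eqref{g-lip}, this gives
\begin{align*}
\ee_{m-1}\|e_m\|^2\le \frac{1+(2K_3+K_4^2)\tau+2K_3^2\tau^2}{1+2\lambda_1\tau}\|e_{m-1}\|^2 .
\end{align*}

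The only step needing care is checking that this ratio is at most $1-\kappa'\tau$ uniformly in $h$ and in small $\tau$. Write the ratio as $1-\tau\,(2\lambda_1-2K_3-K_4^2-2K_3^2\tau)/(1+2\lambda_1\tau)$. Since $\lambda_1>K_3+\frac12K_4^2$, choose
\begin{align*}
\tau_6:=\min\Bigl\{1,\tau_1,\ \frac{\lambda_1-K_3-\frac12K_4^2}{2K_3^2}\Bigr\}.
\end{align*}
For $\tau<\tau_6$ the numerator of that fraction is at least $\lambda_1-K_3-\frac12K_4^2>0$, and the denominator is at most $1+2\lambda_1$. So one may take
\begin{align*}
\kappa'=\frac{\lambda_1-K_3-\frac12K_4^2}{1+2\lambda_1}.
\end{align*}
Finally, use $1-\kappa'\tau\le e^{-\kappa'\tau}$. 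Every constant here depends only on $\lambda_1$ and $K_3,K_4$ (the bound \eqref{tauftau} holds for $\tau<\tau_1$), so the estimate is uniform in $h$.
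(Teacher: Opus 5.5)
Your proof is correct and follows essentially the same route as the paper. The paper expands $\|E_{k+1}\|^2$ via \eqref{ab} and absorbs the increment cross term by Cauchy--Schwarz, which amounts to your bound $(1+2\lambda_1\tau)\|e_m\|^2\le\|R_{m-1}\|^2$; it then uses \eqref{tauftau}, It\^o isometry and \eqref{g-lip} to reach the same contraction factor $\frac{1+\tau(2K_3+2K_3^2\tau+K_4^2)}{1+2\lambda_1\tau}$. Two small remarks: your expansion $\|e_{m-1}+\tau\delta F\|^2=\|e_{m-1}\|^2+2\tau(\<\delta F,e_{m-1}\>+\tau\|\delta F\|^2)$ should read ``$\le$'' (the quadratic term is only $\tau^2\|\delta F\|^2$), which is harmless; and your choice of $\tau_6$, which includes $\tau_1$ and the factor $2K_3^2$, is slightly more careful than the paper's, since it gives a rate $\kappa$ that does not degenerate as $\tau\uparrow\tau_6$.
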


\begin{proof}

For $k \in \nn$, set $F_k^{h,x}=F_\tau(Y_k^{h,x})$, $F_k^{h,y}=F_\tau(Y_k^{h,y})$, $G_k^{h,x}=G(Y_k^{h,x})$, $G_k^{h,y}=G(Y_k^{h,y})$, and $E_k^h=Y_k^{h,x}-Y_k^{h,y}$. 
Using \eqref{ab}, we have  
\begin{align*} 
& \|E_{k+1}^h\|^2 - \|E_k^h\|^2 +  \|E_{k+1}^h-E_k^h\|^2 + 2\tau\|\nabla E_{k+1}^h\|^2 \\
& = 2\tau  {}_{-1}\<F^{h,x}_k-F^{h,y}_k, E_k^h\>_1 + 2 \<E_k^h, (G^{h,x}_k-G^{h,y}_k) \delta_k W\>\\
& \quad + 2 \<E_{k+1}^h-E_k^h, \tau(F^{h,x}_k-F^{h,y}_k)+(G^{h,x}_k-G^{h,y}_k) \delta_k W\>.
\end{align*}   
It follows from \eqref{poin}, \eqref{tauftau}, Cauchy--Schwarz inequality, It\^o isometry, the fact that $G^{h,x}_k-G^{h,y}_k$ is independent of  $\delta_k W$, and the condition \eqref{g-lip} that  
\begin{align*}  
& (1+2\lambda_1\tau) \ee \|E_{k+1}^h\|^2
\le (1+\tau(2K_3+2K_3^2\tau+K_4^2)) \ee \|E_k^h\|^2,
\end{align*}   
from which we obtain
\begin{align*}  
\ee \|E_k^h\|^2 
\le \Big(\frac{1+\tau(2K_3+2K_3^2\tau+K_4^2)}{1+2\lambda_1 \tau}\Big)^k \|x-y\|^2, 
\quad k \in \nn.
\end{align*}   
Note that $a^k\le e^{-(1-a)k}$ for $a\in(0,1)$, and that $\frac{1+\tau(2K_3+2K_3^2\tau+K_4^2)}{1+2\lambda_1 \tau}<1$ for any $\tau \in (0, \tau_6)$, with $\tau_6:=\min\{1, (\lambda_1-K_3-\frac{1}{2}K_4^2) K_3^{-2}\} \in (0, 1]$ which is ensured by the condition $\lambda_1>K_3+\frac{1}{2}K_4^2$, we conclude \eqref{con-ukh}.
\end{proof}

By combining the Lyapunov structure \eqref{lya}, the continuity estimate \eqref{con-ukh}, and the uniform-in-time strong error estimates from Theorems \ref{tm-err} and \ref{tm-err-}, we establish the exponential ergodicity of the tamed-FEM \eqref{t-fem} with quantitative estimates in the Wasserstein-$2$ distance.

\begin{tm} \label{tm-erg}
Let Assumptions \ref{ap-f}-\ref{ap-g}(1) hold with $\lambda_1>K_3+\frac12\max\{ K_4^2, (2(3q\\+1)(q+1)-1)K_6\}$. 
There exists a positive constant $\tau_{\max}<1$ such that for any $\tau\in(0,\tau_{\max})$ and $h\in(0,1)$, the tamed-FEM \eqref{t-fem} possesses a unique invariant measure $\pi_\tau^h$ on $V_h$ and is exponentially mixing.
Moreover, for any $\varepsilon\in(0,1)$, there exists a constant $C$ such that $\mathbb W_2(\pi_0, \pi_\tau^h) \le C (h^{2-\varepsilon}+\tau^{1/2})$, where $\pi_0$ denotes the unique invariant measure of Eq. \eqref{see}.
In particular, for the additive noise case with $G(\cdot) \equiv G \in \LL_2^1$, if Assumption \ref{ap-f} holds with $\lambda_1>K_3$, then $\mathbb W_2(\pi_0, \pi_\tau^h) \le C(h^{2-\varepsilon}+\tau^{1-\varepsilon})$.
\end{tm}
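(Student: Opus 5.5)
The plan has two parts: first existence, uniqueness and exponential mixing of $\pi_\tau^h$ from the Lyapunov bound \eqref{lya} and the contraction \eqref{con-ukh}; then the Wasserstein estimate by comparing the two chains at a large time and using the uniform-in-time bounds of Theorems \ref{tm-err} and \ref{tm-err-}.

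\textbf{Existence, uniqueness, mixing.} Fix $h\in(0,1)$ and $\tau<\tau_{\max}:=\min\{\tau_1,\tau_2,\tau_3,\tau_5,\tau_6\}$. The chain $\{Y_n^h\}$ is a time-homogeneous Feller Markov chain on the finite-dimensional space $V_h$; the Feller property holds because $F_\tau$ and $G$ are continuous. Iterating \eqref{lya} gives $\sup_n \ee\|Y_n^h\|^2\le \|Y_0^h\|^2+c_6/c_5$. Bounded sets of $V_h$ are compact, so the Krylov--Bogoliubov averages $\frac1N\sum_{n<N}\mathcal L(Y_n^h)$ are tight, and any limit point is an invariant measure $\pi_\tau^h$ with finite second moment. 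For uniqueness and exponential mixing I will use the synchronous coupling in \eqref{con-ukh}. For $\nu_1,\nu_2$ with finite second moments, take an optimal coupling of the initial laws and run both chains with the same noise. Then
\[
\mathbb W_2(\nu_1 P_m,\nu_2 P_m)^2\le e^{-\kappa t_m}\mathbb W_2(\nu_1,\nu_2)^2 ,
\]
so $P$ is a strict contraction on $(\mathcal P_2(V_h),\mathbb W_2)$. This gives uniqueness of $\pi_\tau^h$ and $\mathbb W_2(\delta_x P_m,\pi_\tau^h)\le e^{-\kappa t_m/2}(\|x\|+(\int\|y\|^2\dd\pi_\tau^h)^{1/2})$. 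The hypothesis $\lambda_1>K_3+\frac12K_4^2$ needed for \eqref{con-ukh} is contained in the assumed condition.

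\textbf{Ergodic error.} Fix a smooth deterministic $X_0$, for instance $X_0=0$, so that $X_0\in \dot H^2\cap L_\xi^{2(3q+1)}$. By the triangle inequality,
\[
\mathbb W_2(\pi_0,\pi_\tau^h)\le \mathbb W_2(\pi_0,\mathcal L(X(t_n)))+\mathbb W_2(\mathcal L(X(t_n)),\mathcal L(Y_n^h))+\mathbb W_2(\mathcal L(Y_n^h),\pi_\tau^h).
\]
The middle term is bounded by $\|X(t_n)-Y_n^h\|_{L^2_\omega L^2_\xi}$, since the two processes live on the same probability space. I will bound it by Theorem \ref{tm-err} with $p=1$. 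The condition $\lambda_1>K_3+\frac12\max\{K_4^2,(2(3q+1)(q+1)-1)K_6\}$ is exactly the $p=1$ hypothesis there.

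The regularity index is the main issue. Theorem \ref{tm-err} with $\gamma=1$ needs Assumption \ref{ap-g}(2), which is not assumed in Theorem \ref{tm-erg}. So I apply it with $\gamma=1-\varepsilon\in[0,1)$, which uses only Assumption \ref{ap-g}(1). This gives $C(h^{2-\varepsilon}+\tau^{1/2})$ uniformly in $n$, and explains the $h^{2-\varepsilon}$ in the statement. In the additive case I use Theorem \ref{tm-err-} with $\gamma=1-\varepsilon$, which needs only $G\in\LL_2^1$ and $\lambda_1>K_3$, and gives $h^{2-\varepsilon}+\tau^{1-\varepsilon/2}$. This is better than the stated rate $h^{2-\varepsilon}+\tau^{1-\varepsilon}$.

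The third term tends to $0$ as $n\to\infty$ by the mixing estimate above, since $\|Y_0^h\|=\|\PP_h X_0\|$ is bounded. The first term also tends to $0$ by the exponential ergodicity of \eqref{see} in $\mathbb W_2$. This follows from the analogous continuous contraction $\ee\|X^x(t)-X^y(t)\|^2\le e^{-\kappa t}\|x-y\|^2$, which I would obtain by Itô's formula using $f'\le K_3$ (implied by \eqref{ftau'} at $\tau\to0$), $\lambda_1>K_3+\frac12K_4^2$, and the uniform moment bound \eqref{reg-u}, as in \cite{Liu26}. Letting $n\to\infty$, the constant $C$ stays independent of $n$, $h$, $\tau$ because all three theorems are uniform in time, and this gives the claim.

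The main obstacle is this first term: identifying $\pi_0$ and making the convergence $\mathcal L(X(t_n))\to\pi_0$ precise in $\mathbb W_2$. It requires $\pi_0$ to have a finite second moment, which should follow from the uniform moments in \eqref{reg-u}, and it requires the continuous contraction. Everything else is a direct combination of earlier results.
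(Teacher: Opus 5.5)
Your proposal is correct and takes the same route the paper indicates; the paper's own justification is a single sentence. You get exponential mixing from the Lyapunov bound \eqref{lya} together with the contraction \eqref{con-ukh}. You then bound the Wasserstein distance by a triangle inequality at time $t_n$, using the uniform-in-time estimates of Theorems \ref{tm-err} and \ref{tm-err-} with $p=1$ and $\gamma=1-\varepsilon$ (which is exactly why only $h^{2-\varepsilon}$ appears without Assumption \ref{ap-g}(2)), and let $n\to\infty$.

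One point to tighten: uniqueness should hold among all invariant measures, not just those in $\mathcal P_2(V_h)$. This follows because \eqref{lya} forces every invariant measure to have second moment at most $c_6/c_5$: integrate $\|y\|^2\wedge R$ against the invariant measure, iterate \eqref{lya}, then let $R\to\infty$.
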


\section{Numerical Experiments}
\label{sec6}

In this section, we present numerical tests to illustrate the ergodic behavior, as well as the sharpness and time-independence of the strong convergence rates in Theorems \ref{tm-err} and \ref{tm-err-} for the tamed-FEM \eqref{t-fem}, applied to the stochastic Allen--Cahn equation, i.e., Eq. \eqref{see-fg} with $f(\xi)=\epsilon^{-2}(\xi-\xi^3)$, $\xi \in \rr$. 
 
All experiments are performed on the two-dimensional unit square $\mathcal O=(0,1)^2$ with homogeneous DBC. To test both additive and multiplicative noise, we consider the trace-class noise coefficient $g(\xi)=\sigma_1 + \sigma_2 \xi$, $\xi \in \rr$. We set $\varepsilon=0.25$ and choose $(\sigma_1, \sigma_2)=(2, 0)$ for the additive noise case, and $(\sigma_1, \sigma_2)=(2, 0.8)$ for the multiplicative noise case. 
  
The noise is expanded in the orthonormal sine basis $\{g_{k,\ell}\}$ with corresponding eigenvalues $\{q_{k,\ell}\}$:
\begin{align*}
    g_{k,\ell}(\xi_1,\xi_2)=2\sin(k\pi\xi_1)\sin(\ell\pi\xi_2), \quad q_{k,\ell}=(1+k^2+\ell^2)^{-3}, 
    \qquad k,~ \ell \in \nn_+.
\end{align*} 
In the simulations, we truncate the $Q$-Wiener process $W$ and obtain the truncated Wiener process
$W^N =\sum_{1\le k,\ell\le N}(1+k^2+\ell^2)^{-3/2} g_{k,\ell} \beta_{k,\ell}$, where $\{\beta_{k,\ell}\}$ is a family of independent 1D Brownian motions.

Since the functions $g_{k,\ell}$ are also Dirichlet eigenfunctions on $(0,1)^2$, we use the same notation when defining the initial data and the observables below. The temporal tests use $N=32$, whereas the spatial and ergodicity tests use $N=64$. 

\subsection{Uniform-in-time strong convergence tests}

To construct initial data with varying regularities, we define the following for $M\in\nn_+$ and $\alpha,p>0$: 
\begin{align*}
	u_{\alpha,p}^{M}(\xi)=\sum_{k,\ell=1}^{M}\frac{\eta_{k,\ell}g_{k,\ell}(\xi)}{(k^2+\ell^2)^{\alpha}\log^{p}(e+\sqrt{k^2+\ell^2})},
	\qquad
	\bar u_{\alpha,p}^{M}=\frac{u_{\alpha,p}^{M}}{\|u_{\alpha,p}^{M}\|_{L^2(\mathcal O)}},
\end{align*}
where $\eta_{k,\ell}\in\{-1,1\}$ are fixed signs. Slower spectral decay yields a rougher profile, whereas faster decay produces a smoother one. Accordingly, we take
\begin{align*}
	X_{0,\mathrm{time}}^{(1)}=0.8\,g_{1,1}+\bar u_{1,1.2}^{64},
	\qquad
	X_{0,\mathrm{time}}^{(2)}=0.8\,g_{1,1}+\bar u_{1.5,2}^{64},
\end{align*}
for the temporal tests, and
\begin{align*}
	X_{0,\mathrm{space}}^{(1)}=0.8\,g_{1,1}+\bar u_{1,0.51}^{512},
	\qquad
	X_{0,\mathrm{space}}^{(2)}=0.8\,g_{1,1}+\bar u_{1.5,1}^{512},
\end{align*}
for the spatial tests. 
Although these series are truncated, the two choices are designed to mimic data near the $\dot H^1$ and $\dot H^2$ regularity thresholds, respectively.

{\bf Temporal strong convergence.}
We fix $h=1/32$ and $T=50$. The reference solution is computed on the same finite element space with $\tau_{\rm ref}=2^{-14}$ for additive noise and $\tau_{\rm ref}=2^{-13}$ for multiplicative noise. 
In each Monte--Carlo sample, the coarse and reference paths are coupled by the same truncated Wiener path, with coarse increments obtained by summing the corresponding fine increments. For $I\subset[0,T]$, define
\begin{align*}
    E_I(\tau)
    = \max_{t_j\in I}
      \Big(
      \frac1{K_{\rm mc}}\sum_{m=1}^{K_{\rm mc}}
       \|U_j^{h,\tau,(m)}-U_{\rm ref}^{h,\tau_{\rm ref},(m)}(t_j) \|_{L^2(\mathcal O)}^2
      \Big)^{1/2}.
\end{align*}
We also compute the normalized constant
\begin{align*}
    C_{[0,T]}^{\rm time}
    = \max_{\tau}
    \frac{E_{[0,T]}(\tau)}{\tau^\beta},
    \qquad
    \beta=\frac12 \ \hbox{for } X_0^{(1)},\quad
    \beta=1 \ \hbox{for } X_0^{(2)}.
\end{align*}
Thus, the maximum over time is taken after forming the Monte--Carlo mean-square error at each sampled time. With $K_{\rm mc}=500$, Fig. \ref{fig:t-add} gives fitted rates $0.560$ and $0.934$ for the low- and high-regularity data in the additive case, which are consistent with the expected orders $\tau^{1/2}$ and $\tau$ Fig. \ref{fig:t-mul} gives rate $0.576$ for the multiplicative case with $X_0^{(1)}$. In all cases, $C_{[0,T]}^{\rm time}$ remains stable as $T$ grows from $1$ to $50$, illustrating the uniform-in-time behavior.

\begin{figure}[htbp]
\centering
\includegraphics[width=\textwidth]{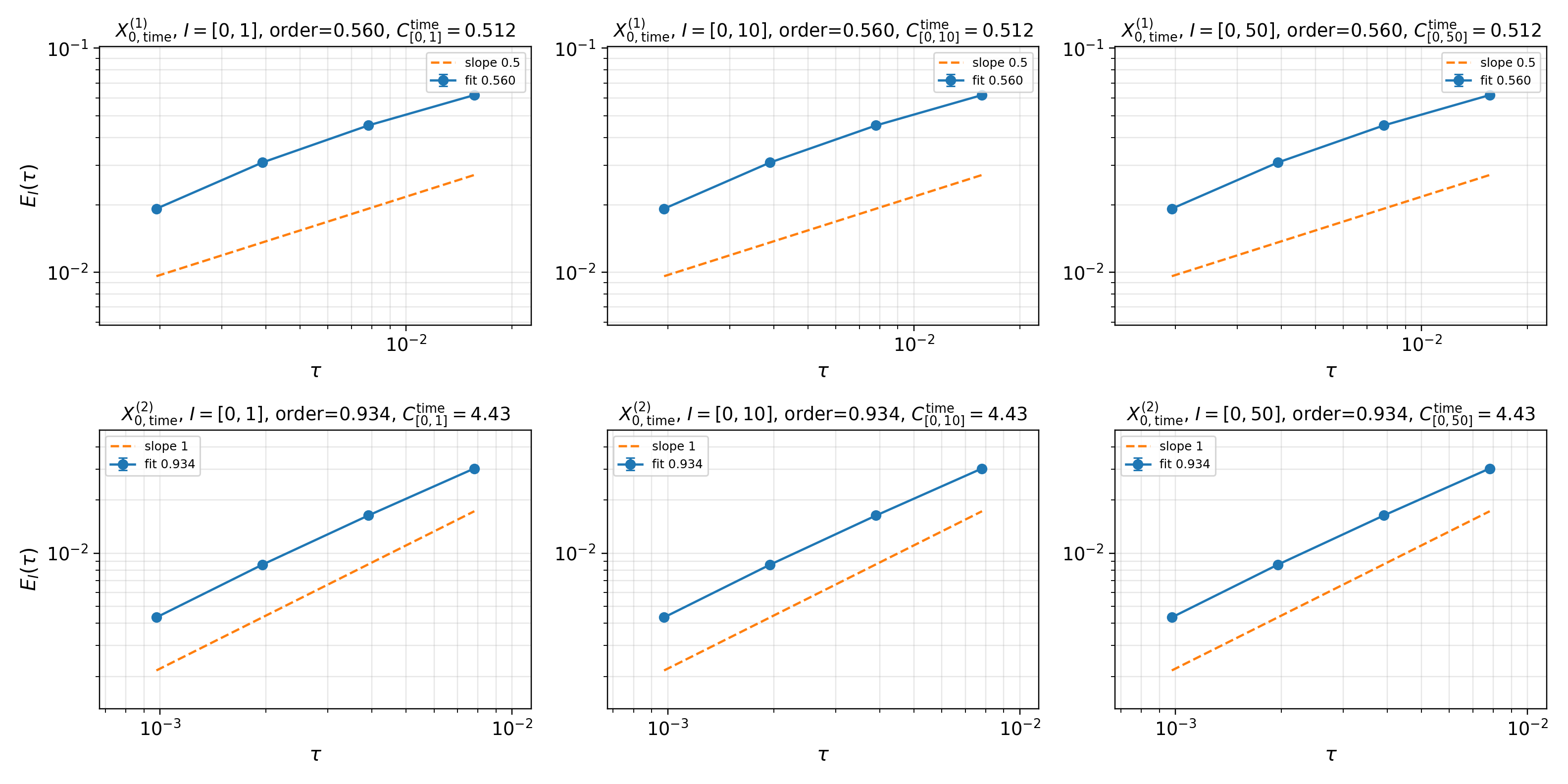}
\caption{Temporal uniform-in-time orders for additive noise.}
\label{fig:t-add}
\end{figure}

\begin{figure}[htbp]
\centering
\includegraphics[width=\textwidth]{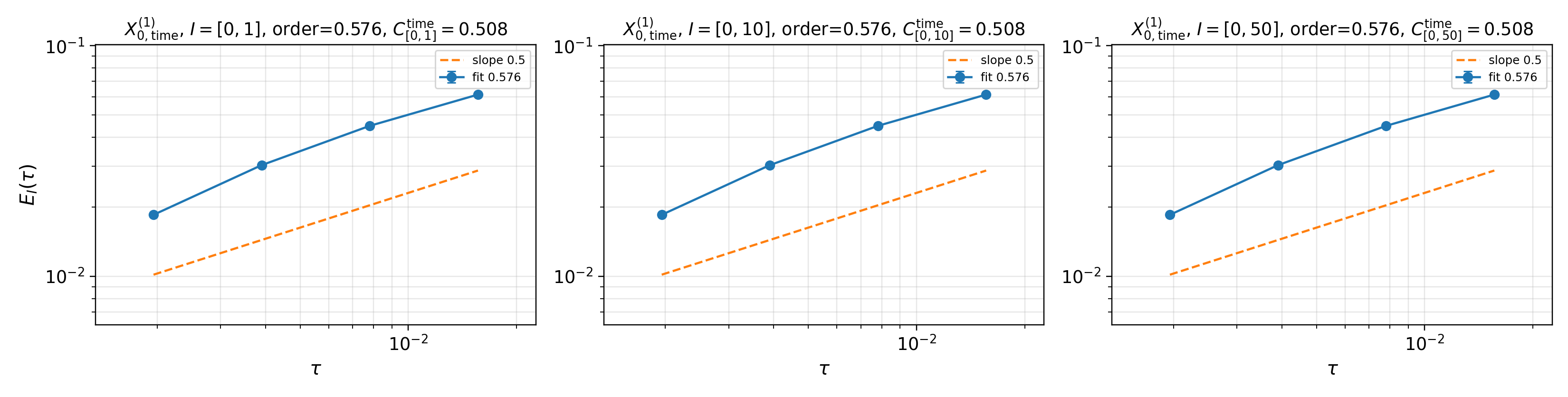}
\caption{Temporal uniform-in-time orders for multiplicative noise.}
\label{fig:t-mul}
\end{figure}

{\bf Spatial strong convergence.}
We fix $\tau=2^{-8}$, $T=50$, and compute the reference solution on $h_{\rm ref}=1/256$. The tested meshes are $h=1/n$, $n\in\{8,16,32,64\}$. In each of the $K_{\rm mc}=100$ samples, the coarse and reference solutions use the same truncated Wiener path. The coarse solution is prolonged to the reference mesh by the piecewise linear interpolation $I_h^{h_{\rm ref}}$. For $I\subset[0,T]$, set
\begin{align*}
    E_I(h)
    = \max_{t_j\in I}\Big(
      \frac1{K_{\rm mc}}\sum_{m=1}^{K_{\rm mc}}
      \|I_h^{h_{\rm ref}}U_j^{h,\tau,(m)}-U_j^{h_{\rm ref},\tau,(m)} \|_{L^2(\mathcal O)}^2
      \Big)^{1/2}.
\end{align*}
Thus, the maximum time over $I$ is taken after forming the Monte--Carlo mean-square error at each sampled time. We also compute
\begin{align*}
    C_{[0,T]}^{\rm space}
    = \max_h \frac{E_{[0,T]}(h)}{h^\beta},
    \qquad
    \beta=1 \ \hbox{for } X_0^{(1)},\quad
    \beta=2 \ \hbox{for } X_0^{(2)}.
\end{align*}
Figs. \ref{fig:s-add} and \ref{fig:s-mul} show the fitted spatial strong orders of $1.110$ and $1.941$ for the low- and high-regularity profiles, respectively, which are consistent with the rates predicted
by Theorems \ref{tm-err} and \ref{tm-err-}. The constants $C_{[0,T]}^{\rm space}$ are again essentially unchanged when $T$ grows from $1$ to $50$.

\begin{figure}[htbp]
\centering
\includegraphics[width=\textwidth]{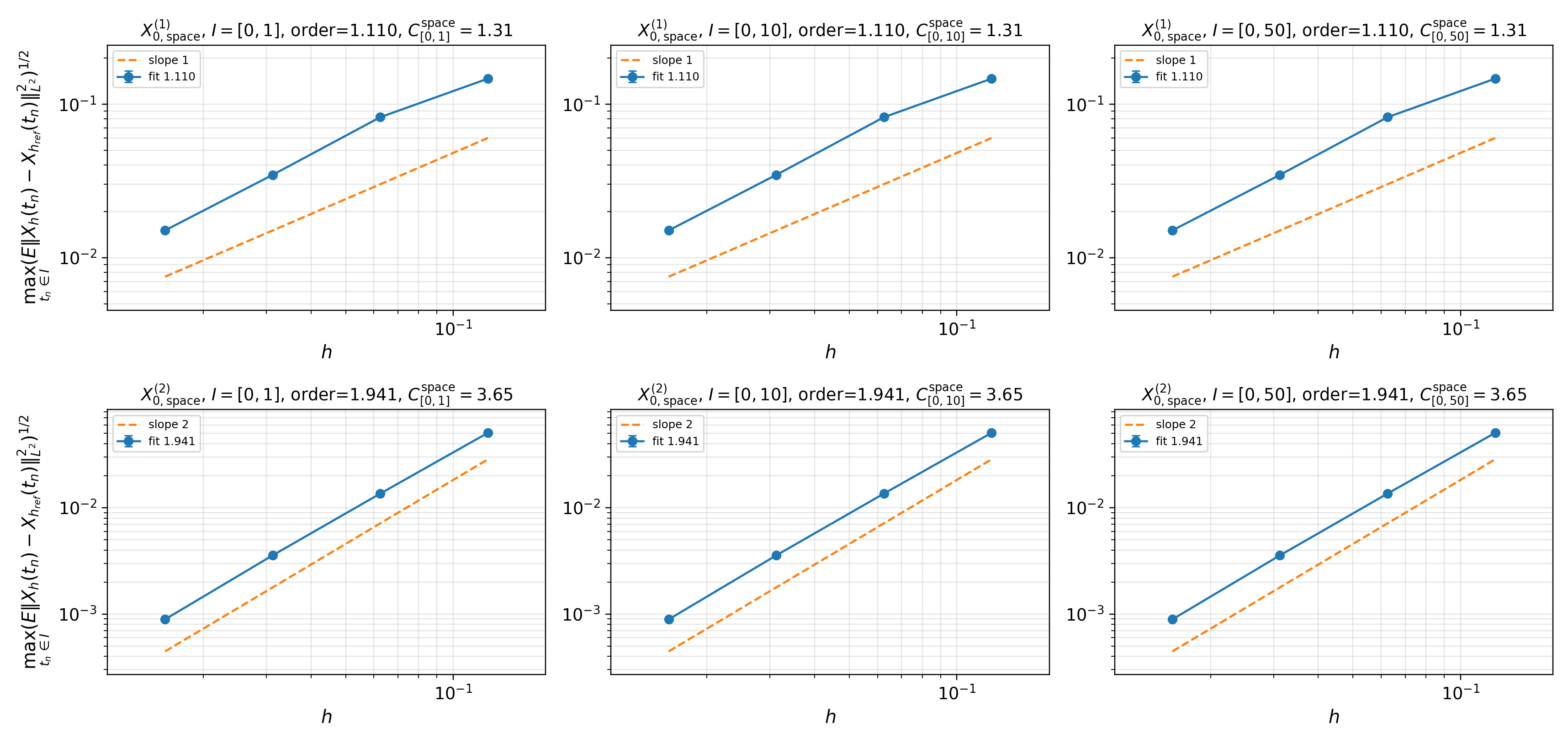}
\caption{Spatial uniform-in-time orders for additive noise.}
\label{fig:s-add}
\end{figure}

\begin{figure}[htbp]
\centering
\includegraphics[width=\textwidth]{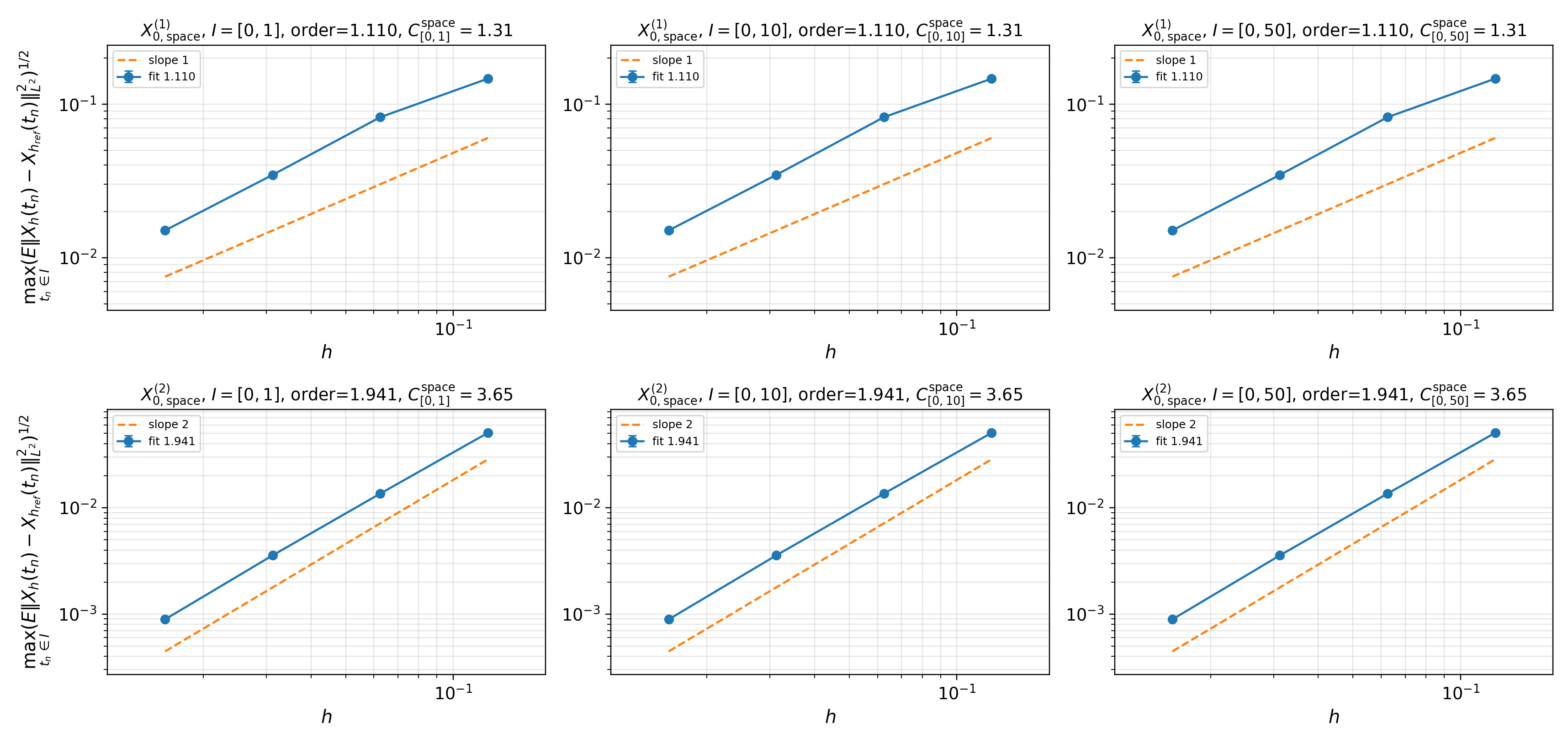}
\caption{Spatial uniform-in-time orders for multiplicative noise.}
\label{fig:s-mul}
\end{figure}

\subsection{Ergodicity test}
We take $h=1/32$, $\tau=2^{-8}$, $T=50$, and $K_{\rm mc}=1000$. The three initial states are
\begin{align*}
	X_0^{(1)}=1.4g_{1,1},\qquad
    X_0^{(2)}=-1.4g_{1,1},\qquad
    X_0^{(3)}=\frac{g_{2,1}+g_{1,2}}{\sqrt2}.
\end{align*}
For each Monte--Carlo path, all three initial states are driven by the same truncated Wiener path. We monitor
\begin{align*}
\varphi_1(u)&=\tanh(\langle u,g_{1,1}\rangle),\\
\varphi_2(u)&=\tanh\big(\langle u,g_{2,1}+g_{1,2}\rangle/\sqrt2\big),\\
\varphi_3(u)&=1-\exp\big(-\|u\|_{L^2(\mathcal O)}^2\big).
\end{align*}
Fig. \ref{fig:ergodic} shows that the ensemble means from the three initial states rapidly merge in both the additive and multiplicative cases, illustrating the loss of memory of initial data and supporting the exponential ergodicity proved in Section \ref{sec5}.

\begin{figure}[htbp]
\centering
\includegraphics[width=\textwidth]{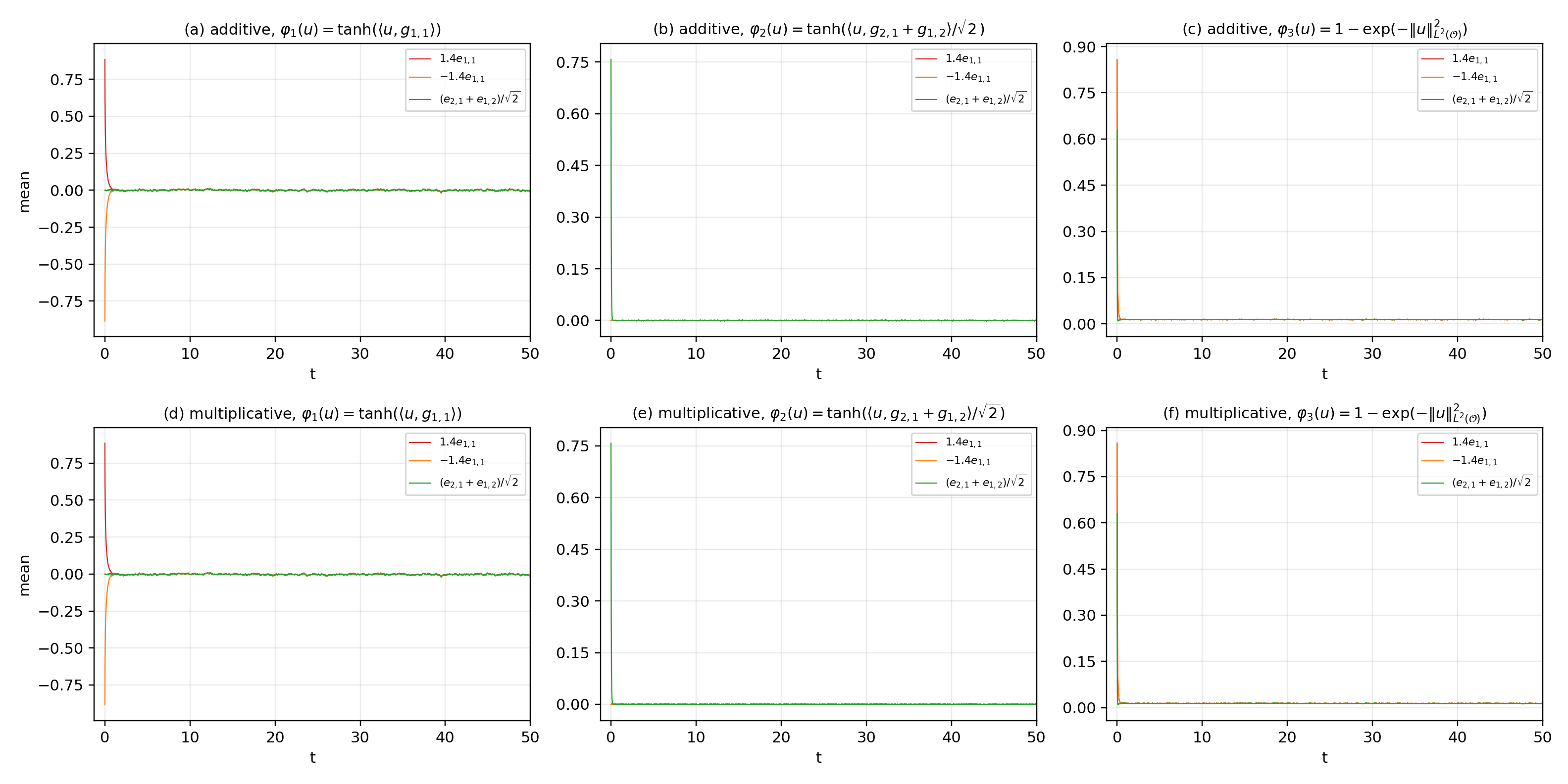}
\caption{Ergodicity tests for additive noise (top) and multiplicative noise (bottom).}
\label{fig:ergodic}
\end{figure}

\bibliographystyle{plain}
\bibliography{references.bib}
\end{document}